\documentclass[%
 reprint,
 amsmath,amssymb,
 aps,
]{revtex4-2}

\newcommand\freefootnote[1]{%
  \let\thefootnote\relax%
  \footnotetext{#1}%
  \let\thefootnote\svthefootnote%
}

\usepackage{xcolor}
\usepackage{tikz}
\usetikzlibrary{positioning, arrows.meta}
\usepackage{amssymb}
\usepackage{amsthm}
\newtheorem{theorem}{Theorem}
\newtheorem{proposition}{Proposition}
\setlength{\parskip}{0pt} %
\newtheorem{corollary}{Corollary}
\usepackage{hyperref}
\usepackage{mathrsfs}

\usepackage[normalem]{ulem}
\usepackage{graphicx}%
\usepackage{dcolumn}%
\usepackage{bm}%

\usepackage[pass]{geometry}

\DeclareMathOperator*{\argmin}{arg\,min}

\usepackage{xr}
\usepackage{algpseudocode}
\usepackage{titlesec}
\usepackage[ruled,vlined]{algorithm2e}
\externaldocument{PRL_supplemental}

\begin{document}

\preprint{APS/123-QED}

\title{Invariant Measures in Time-Delay Coordinates for\\ Unique Dynamical System Identification}%
\author{Jonah Botvinick-Greenhouse$^{1}$, Robert Martin$^2$, and Yunan Yang$^3$}
 \affiliation{$^1$Center for Applied Mathematics, Cornell University, Ithaca, NY 14853\\
 $^2$DEVCOM ARL Army Research Office, Research Triangle Park, Durham, NC 27709\\
$^3$Department of Mathematics,
Cornell University,
Ithaca, NY 14853}%

\date{\today}%

\begin{abstract}

While invariant measures are widely employed to analyze physical systems when a direct study of pointwise trajectories is intractable, e.g., due to chaos or noise, they cannot uniquely identify the underlying dynamics. Our first result shows that, in contrast to invariant measures in state coordinates, e.g., $[x(t), y(t), z(t)]$, the invariant measure expressed in time-delay coordinates, e.g., $[x(t), x(t-\tau),\ldots, x(t-(m-1)\tau)]$, can identify the dynamics up to a topological conjugacy. Our second result resolves the remaining ambiguity: by combining invariant measures constructed from multiple delay frames with distinct observables, the system is uniquely identifiable, provided that a suitable initial condition is satisfied. These guarantees require informative observables and appropriate delay parameters ($m,\tau$), which can be limiting in certain settings. We support our theoretical contributions through a series of physical examples demonstrating how invariant measures expressed in delay-coordinates can be used to perform robust system identification in practice.

\end{abstract}

\maketitle

\textit{Introduction.}\---- 
Dynamical system parameter identification is a fundamental task across numerous physical applications, e.g., fluid flow surrogate modeling, ion thruster model calibration, gravitational wave parameter estimation and weather prediction. It is typically formulated as an optimization procedure, where observed time-series data is compared to outputs of candidate models to calibrate the best parameters. Sparse sampling, noisy measurements, partial observations, and chaotic dynamics represent common challenges for discovering physically informative models. In particular, directly comparing long time-series data from chaotic systems is known to be impractical, as the intrinsic unpredictability of chaos makes it difficult to differentiate between model inaccuracies and inherent instability~\cite{MOECKEL1997187, yang2023optimal, jiang2023training}.

In contrast, the \textbf{invariant measures} of chaotic systems, which are independent of initial conditions, resilient to measurement noise, and can be accurately approximated even under slow sampling conditions, offer a robust alternative for performing system identification. In several works, invariant measures have been used to quantify the discrepancy between dynamical systems~\cite{greve2019data,chen2023detecting,mezic2004comparison,MUSKULUS201145}, while steady states of the Fokker–Planck equation have been used for fast invariant measure evaluation~\cite{yang2023optimal,botvinick2023learning}. Invariant measures have also been incorporated in regularization terms combined with trajectory-based methods~\cite{jiang2023training,schiff2024dyslim,platt2023constraining,li2022learning,tang2024learning}. Other studies have explored optimal perturbations to produce desired linear responses in invariant measures with applications to climate systems~\cite{kloeckner2018linear,galatolo2017controlling,antown2022optimal,npg-24-393-2017,hairer2010simple}. Novel neural network architectures and loss functions have also been designed to reconstruct invariant measures in dynamical system modeling~\cite{linot2023stabilized,guan2024lucie,chakraborty2024divide,park2024when}.

Despite their advantages and broad applications, invariant measures are known for the fundamental limitation that distinct systems can share the same invariant measure~\cite{Foreman2011}. Consequently, relying solely on a comparison of invariant measures may lead to incorrect identification of the underlying dynamics. Inspired by the delay-coordinate transformation from Takens' embedding theory~\cite{Takens1981DetectingSA,sauer1991embedology}, comparing \textbf{invariant measures in \textit{time-delay coordinates}} is a promising alternative for system identification. Time-delay coordinates are assembled by sampling a single observable at evenly spaced intervals, constructing vectors of the form $[x(t), x(t-\tau), \dots, x(t-(m-1)\tau)]$, where $\tau$ denotes the delay interval, $m$ is the embedding dimension, and $[x(t), y(t), z(t)]$ is the original state coordinate. Time-delay coordinates can be easily formed from partially observed data and have been widely employed to reconstruct full system dynamics \cite{farmer1987predicting,young2023deep}. As shown in Fig.~\ref{fig:torus}, systems sharing identical invariant measures in their original state coordinates can exhibit distinct invariant measures in time-delay coordinates. This motivates our theoretical and computational study on using invariant measures in time-delay coordinates for modeling dynamical systems.

\begin{figure}[h!]
    \centering
    \includegraphics[width = \linewidth]{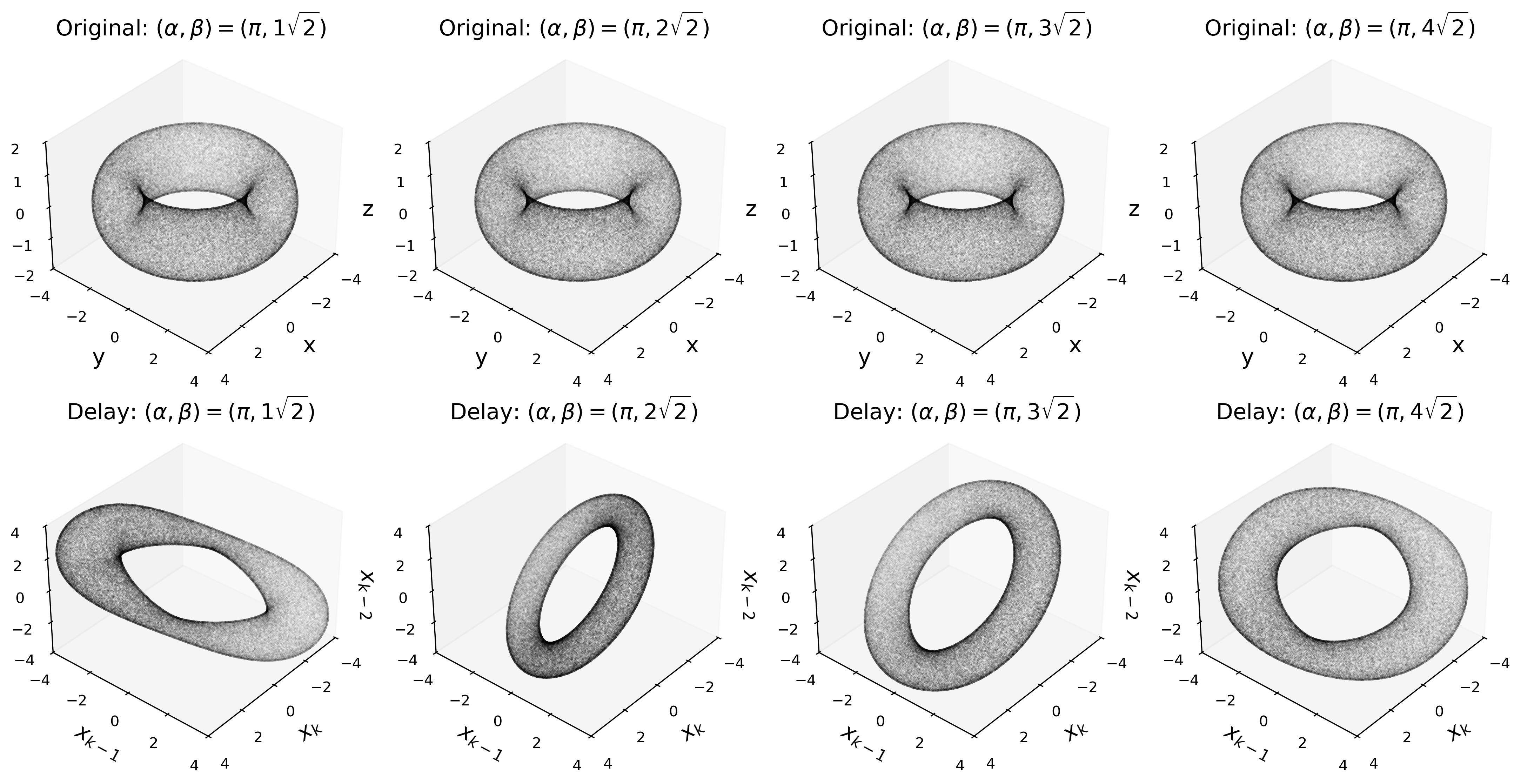}
    \vspace{-.5cm}
    \caption{Illustration of the difference between invariant measures in the state coordinates and the delay coordinates for the torus rotation $T_{\alpha,\beta}(z_1,z_2)= (z_1+\alpha,z_2+\beta) \pmod{ 1}$. The top row shows four different choices of $(\alpha,\beta)$ for which the state-coordinate invariant measures coincide. The bottom row shows that all four systems can be distinguished by their invariant measure in the time-delay coordinates.}
    \label{fig:torus}
\end{figure}

In this work, we present two key theoretical results: (1)~if two dynamical systems share the same invariant measure in time-delay coordinates, then they are topologically conjugate on the support of the measure; and (2)~the non-uniqueness inherent in this conjugacy can be resolved by utilizing \textit{multiple delay-coordinate invariant measures} obtained from distinct observables. Specifically, a finite set of delay-coordinate invariant measures provides sufficient information to uniquely identify a dynamical system under mild assumptions. Although previous studies have explored the numerical use of delay-coordinate invariant measures~\cite{MOECKEL1997187, greve2019data, botvinick2023learning}, our work is the first to establish formal theoretical guarantees. We further translate these theoretical insights into practical computational algorithms through the alignment of simulated and observed invariant measures in delay coordinates. We demonstrate the robustness of our algorithms using a series of physical examples under challenging data conditions.

\smallskip
\textit{State-coordinate invariant measures.}\----
We begin by introducing key notions that will appear in our subsequent discussion. Let $ \mathcal{P}(X) $ denote the space of probability measures on a domain $ X $. For any $ \mu \in \mathcal{P}(X) $, the \textit{support} of $ \mu $, denoted $ \mathrm{supp}(\mu) \subseteq X $, is the subset of $ X $ where $ \mu $ assigns nonzero probability. A fundamental concept in the transformation of measures is the \textit{pushforward}, which describes how one probability measure is mapped to another under a given function. Specifically, for a mapping $f: X \to Y$, the pushforward of a measure $\mu \in \mathcal{P}(X)$, denoted by $f\#\mu \in \mathcal{P}(Y)$, is defined as: 
$$
(f\#\mu)(B) = \mu(f^{-1}(B)) \quad \forall B \subseteq Y,
$$
for any suitable subset $B$. Intuitively, the pushforward captures the redistribution of probabilities induced by the action of $ f $. Thus, $f$ can be seen as a coordinate transform or a ``change of variables.''

Consider now a discrete-time dynamical system defined by a mapping $ T: X \to X $. For continuous-time dynamical systems, $ T $ may correspond to the time-$ \Delta t $ flow map of a vector field. A measure $ \mu \in \mathcal{P}(X) $ is said to be \textit{$ T $-invariant} if $ T\#\mu = \mu $, signifying that $ \mu $ remains unchanged under the action of $ T $. Invariant measures, therefore, encode the statistical properties of the system that remain stable over time.

An invariant measure is further classified as \textit{$ T $-ergodic} if $ \mu(B) \in \{0, 1\} $ for any subset $B$ where $T^{-1}(B) = B$, i.e., $B$ is $T$-invariant. This implies that nontrivial $T$-invariant sets  have either zero or full measure. Ergodicity ensures that the system cannot be decomposed into smaller independent subsystems.

To explicitly connect these concepts to observable dynamics, we introduce the \textit{basin of attraction} of an invariant measure $\mu$, denoted by $B_{\mu,T} \subseteq X$. The basin consists of all initial states $x \in X$ whose long-term, time-averaged behavior aligns with the measure $\mu$. Formally, for any observable $\phi \in C(X)$, a state $x$ lies in the basin if
\begin{equation}\label{eq:basin}
\lim_{N\to\infty} \frac{1}{N}\sum_{k=0}^{N-1}\phi(T^k(x)) = \int \phi\,d\mu, \quad \forall \phi \in C(X).
\end{equation}
A measure $\mu$ is called a \textit{physical measure} if its basin of attraction $B_{\mu,T}$ has positive Lebesgue measure~\cite{Young2002}, meaning that the basin of attraction occupies a non-negligible volume in $X$. Prominent examples of dynamical systems admitting physical measures include attracting fixed points, stable limit cycles, and chaotic attractors, such as the Lorenz-63 system~\cite{luzzatto2005lorenz}. Physical measures are essential because they characterize the statistical properties of trajectories that are observable in practice, bridging the gap between mathematical theory and experimental observations.

\smallskip 

\textit{Delay-coordinate invariant measures.}\----  
We now examine how invariant measures are transformed under a change of coordinates induced by the dynamics. Consider two dynamical systems, $T: X \to X$ and $\widetilde{T}: Y \to Y$. These systems are said to be \textit{topologically conjugate} if there exists an invertible continuous function $h: X \to Y$ with a continuous inverse, such that $\widetilde{T} = h \circ T \circ h^{-1}$. Physically, $h$ can be interpreted as a nonlinear coordinate transformation that maps the trajectories of $T$ onto those of $\widetilde{T}$, thereby re-expressing the system's behavior in a new coordinate frame.

 The following result, rigorously proven in the supplemental materials (\textbf{SM}), establishes a clear connection between the statistical properties of two systems that are equivalent under a coordinate transformation.
\begin{proposition}\label{prop:1}
If $\widetilde{T} = h\circ T \circ h^{-1}$ and $\mu$ is $T$-invariant, then $h\# \mu$ is $\widetilde{T}$-invariant. Moreover, if $\mu$ is $T$-ergodic, then $h\# \mu$ is additionally $\widetilde{T}$-ergodic.
\end{proposition}

Although invariant measures alone may not suffice to distinguish between two systems $T,S:X\to X$, Proposition~\ref{prop:1} provides an important insight: it may be possible to achieve this distinction in transformed coordinates. To this end, we construct maps $h_T, h_S: X \to Y$, which transform $T$ and $S$ into conjugate systems $\widetilde{T} = h_T \circ T \circ h_T^{-1}$ and $\widetilde{S} = h_S \circ S \circ h_S^{-1}$ acting on $Y$. The invariant measures of these conjugate systems, $h_T\#\mu \in \mathcal{P}(Y)$ and $h_S\#\mu \in \mathcal{P}(Y)$, now become the objects of comparison. In what follows, we take $h_T$ and $h_S$ to be \textit{delay-coordinate maps}, inspired by Takens' seminal embedding theory~\cite{Takens1981DetectingSA}.

Time-delay embedding has revolutionized the study of nonlinear trajectory data, with applications across diverse fields, including fluid mechanics and neuroscience~\cite{saad1998comparative,tajima2015untangling,yuan2021flow,young2023deep}. In many practical scenarios, the full trajectory of a dynamical system, $ \{T^k(x)\}_{k=0}^{N} $, is unavailable. Instead, experimental recordings often consist of a scalar time series, $ \{y(T^k(x))\}_{k=0}^{N} $, where $ y $ is a scalar-valued observable capturing partial measurements of the system. Remarkably, time-delay embedding can reconstruct the original dynamical system—up to topological conjugacy—using only this scalar time-series data.

Central to this approach is the \textit{delay-coordinate map}
\begin{equation}\label{eq:delay_coord}
    \Psi_{(y,T)}^{(m)}(x) := \left[y(x), y\left(T(x)\right), \dots, y\left(T^{m-1}(x)\right)\right],
\end{equation}
which is injective under proper  conditions~\cite{Takens1981DetectingSA,sauer1991embedology}. Physically, this means that the delay-coordinate map can uniquely distinguish points in $X$, even when only a single coordinate of $X$ is recorded over time. This powerful property enables the reconstruction of the system's dynamics in the delay-coordinate space $\mathbb{R}^m$, offering insight into the underlying  system $T:X\rightarrow X$ that would otherwise remain hidden.

Using the delay-coordinate map, the transformed dynamics can be expressed as:
\begin{equation}\label{eq:delay_dynamics}
    \hat{T}_{(y,m)} := \big[\Psi_{(y,T)}^{(m)}\big]\circ T \circ \big[\Psi_{(y,T)}^{(m)}\big]^{-1}: \mathbb{R}^m\rightarrow \mathbb{R}^m.
\end{equation}
By~\eqref{eq:delay_dynamics},  $T$ and $\hat{T}_{(y,m)}$ are topologically conjugate. Consequently, if $T$ has an invariant measure $\mu \in \mathcal{P}(X)$, then $\hat{T}_{(y,m)}$ admits the invariant measure
\begin{equation}\label{eq:delay_im}
    \hat{\mu}_{(y,T)}^{(m)} := \Psi_{(y,T)}^{(m)}\#\mu \in \mathcal{P}(\mathbb{R}^m)\,,
\end{equation}
as a result of~Proposition~\ref{prop:1}. This measure, referred to as the \textit{delay-coordinate invariant measure}, provides a statistical characterization of the reconstructed dynamics in $ \mathbb{R}^m $. We propose using this measure for system identification tasks.  See Fig.~\ref{fig:lorenz_visual} for a visualization.

\begin{figure}[h]
    \centering
    \includegraphics[width=\linewidth]{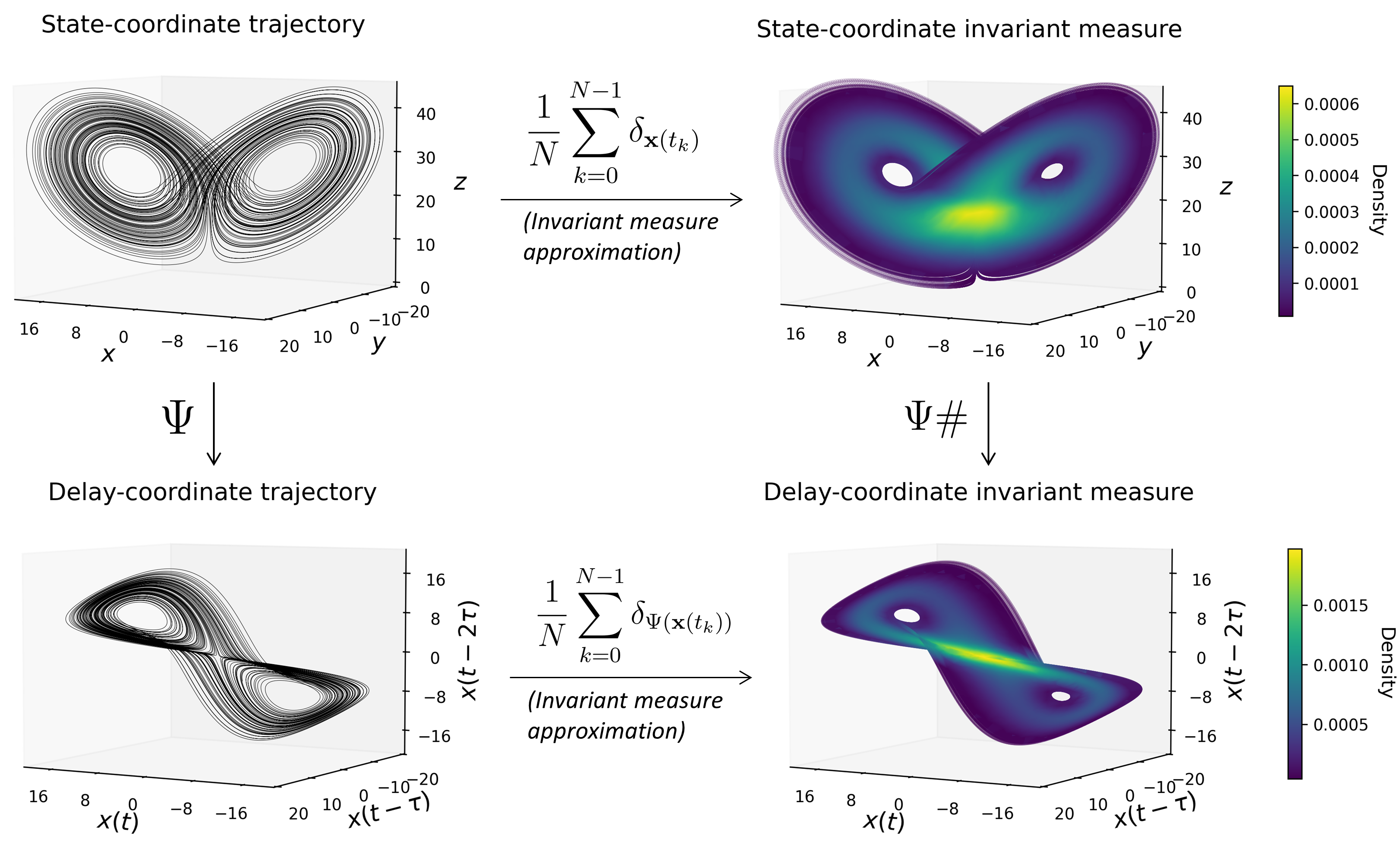}
    \vspace{-.5cm}
    \caption{Visualization of the state-coordinate (top row) and delay-coordinate (bottom row) invariant measures for the Lorenz-63 system. Here, $\Psi$ is the delay-coordinate map~\eqref{eq:delay_coord}. }
    \label{fig:lorenz_visual}
\end{figure}
\smallskip

\textit{Theoretical Results.}\---- 
We now present our primary theoretical results. For clarity, we provide a concise overview of the theorems here, while the complete versions and detailed proofs are available in the  \textbf{SM}.

Consider two smooth discrete-time dynamical systems, each defined by an invertible, differentiable map $T$ or $S$ on an open set $U \subseteq \mathbb{R}^n$, and each admitting an invariant measure $\mu$ and $\nu$, respectively. We choose an embedding dimension $m \in \mathbb{N}$ such that $m > 2\max\{d_{\mu}, d_{\nu}\}$, where $d_{\mu}$ is the box-counting (or fractal) dimension~\cite{liebovitch1989fast} of the attractor associated with $\mu$. For example, for the Lorenz-63 system shown in Fig.~\ref{fig:lorenz_visual}, one typically finds $d_{\mu} \approx 2$.  We assume a mild condition on the growth rate of periodic orbits for $T$ and $S$, typically satisfied in physical systems. Under this setup, we present our first result below.
\begin{theorem}[Sketch]\label{thm:main1}
Equality $\hat{\mu}_{(y,T)}^{(m+1)} = \hat{\nu}_{(y,S)}^{(m+1)}$ of two invariant measures in time-delay coordinates  implies  $T$ and $S$ are topologically conjugate on $\textup{supp}(\mu)$, for almost every continuous and differentiable observable $y$.
\end{theorem}

The phrase ``almost every'' in Theorem~\ref{thm:main1} comes from the theory of prevalence~\cite{hunt1992prevalence}, implying that the set of observables $y$ for which the result does not hold is negligible in a probabilistic sense. The embedding dimension required to construct the delay-coordinate invariant measures in Theorem~\ref{thm:main1} is one higher than the dimension specified in the generalized Takens' theorem~\cite{sauer1991embedology}, which  is critical in our analysis; see the \textbf{SM}. Indeed, the key to our proof is the observation that a single point 
\begin{equation*}\label{eq:point_ex}
    (\lefteqn{\underbrace{\phantom{y(x),y(T(x)),\dots, y(T^{m-1}(x))}}_{\footnotesize\Psi_{(y,T)}^{(m)}(x)}}y(x),\overbrace{y(T(x)),\dots, y(T^{m-1}(x)), y(T^m(x))}^{\footnotesize\Psi_{(y,T)}^{(m)}(T(x))})\in\mathbb{R}^{m+1}
\end{equation*} 
determines both $\Psi_{(y,T)}^{(m)}(x)$ and $\Psi_{(y,T)}^{(m)}(T(x))$, which forms an input-output pair of the $m$-dimensional delay-coordinate dynamics defined by $\hat{T}_{(y,m)}$; see \eqref{eq:delay_dynamics}.  

Existing works that perform system identification by comparing state-coordinate invariant measures often struggle to enforce ergodicity of the reconstructed system~\cite{greve2019data, yang2023optimal, botvinick2023learning}. In contrast, the conjugacy relation deduced in Theorem~\ref{thm:main1} implies that aligning delay-coordinate invariant measures can \textit{guarantee ergodicity for the reconstructed dynamics}.

\begin{corollary}[Sketch]\label{cor:ergodic}
If $T$ is $\mu$-ergodic, then the equality of delay-coordinate invariant measures in Theorem~\ref{thm:main1} implies that $S$ is additionally $\nu$-ergodic, for almost every continuous and differentiable observable.
\end{corollary}
While Theorem~\ref{thm:main1} showed that a single observable can reveal topological equivalence, our second main result goes further: by using a small number of observables, along with short trajectory information from one well-chosen initial condition, we can \textit{uniquely recover the dynamics on the support of $\mu$}. In other words, this richer set of delay-coordinate measures is enough to distinguish the two systems.  
\begin{theorem}[Sketch]\label{thm:main2}
Let systems $T$ and $S$ share the same invariant measure $\mu$. Assume
\begin{enumerate}
    \item there exists a point $x^* \in B_{\mu,T} \cap \textup{supp}(\mu)$ such that $T^k(x^*) = S^k(x^*)$ for $1 \leq k \leq m - 1$, and \label{item:assumption1}
    \item the delay-coordinate invariant measures built from $m$ different observables, $\{y_i\}_{i=1}^m$, are identical under both $T$ and $S$.\label{item:assumption2}
\end{enumerate}
Then $T =S$ on $\textup{supp}(\mu)$, for almost every continuous and differentiable observable $Y = (y_1, \dots, y_m)$.
\end{theorem}

In the \textbf{SM}, we also discuss a variant of Theorem~\ref{thm:main2}, which provides an additional method for obtaining unique system identification from a measure-based comparison.\smallskip

\textit{Algorithms.}\---- Motivated by Theorems~\ref{thm:main1} and~\ref{thm:main2}, we now propose practical algorithms for data-driven system identification based on the comparison of simulated and observed delay-coordinate invariant measures. Specifically, we assume access to a parameterized model $T_\theta : \mathbb{R}^n \to \mathbb{R}^n$, which we seek to calibrate to the true dynamics using observed trajectory data. For the examples considered, all systems operate in continuous time, and thus $T_\theta$ is defined as the flow map of a parameter-dependent differential equation. In Algorithm~\ref{alg:1}, we outline a method for recovering these unknown parameters from partially observed trajectory data using the delay-coordinate invariant measure.\smallskip

\begin{algorithm}[H] \label{alg:1}
\caption{\protect\mbox{Trajectory-based delay measure opt.}}
\SetAlgoLined
\noindent \textbf{Input.} Partial observations $\{y(t_i)\}_{i=1}^N\subseteq \mathbb{R}$.\\
\noindent (i) Choose embedding parameters $m,\overline{\tau} \in \mathbb{N}$.\\
(ii) Set $K = N-(m-1)\overline{\tau}$ and construct \\
\vspace{-.6cm}
\begin{equation}\label{eq:delayconstr}
    \hat{\mu}_{y} = \frac{1}{K }\sum_{i=1}^K \delta_{(y(t_{i+(m-1)\overline{\tau}}),\dots,  y(t_i))} \in \mathcal{P}(\mathbb{R}^m).
\end{equation}
\vspace{-.5cm}
\\
(iii) Solve the optimization\\
\vspace{-0.6cm}
$$\tilde{\theta} = \argmin_{\theta\in \Theta} \mathcal{D}(\hat{\mu}_{y}(\theta), \hat{\mu}_y),$$\\
\vspace{-.3cm}
where $\hat{\mu}_{y}(\theta)$ is computed from a long trajectory \\
\noindent simulation with parameter $\theta\in\Theta$, following \eqref{eq:delayconstr}.\\
\noindent \textbf{Output.} Optimal parameters $\tilde{\theta} \in \Theta$.
\end{algorithm}
\smallskip
In Algorithm~\ref{alg:1}, $\mathcal{D}(\cdot,\cdot)$ is any metric or divergence on the space of probability measures, e.g., Maximum-Mean Discrepancy (MMD) or Wasserstein distance. While Algorithm~\ref{alg:1} is compatible with partially observed trajectory data, it can be challenging to obtain the gradients of $\theta \mapsto \hat{\mu}_y(\theta)$, due to the approximation of $\hat{\mu}_y(\theta)$ from a long trajectory simulation. However, when the dimension of $\theta$ is large, it is necessary to use gradient-based optimization. In Algorithm~\ref{alg:2}, we introduce an alternative approach to delay-coordinate invariant measure matching which enables stable gradient computations.  \smallskip

\begin{algorithm} \label{alg:2}
\caption{\protect\mbox{Pushforward-based delay measure opt.} }
\SetAlgoLined
\noindent\textbf{Input.} Full-state observations $\{x(t_i)\}_{i=1}^N\subseteq \mathbb{R}^{n}$.\\
\noindent (i) Select $m,\overline{\tau} \in \mathbb{N}$ and observables $y_1,\dots, y_{\ell}$.\\
(ii) Construct $\hat{\mu}_{y_1},\dots, \hat{\mu}_{y_{\ell}}\in \mathcal{P}(\mathbb{R}^m)$ following \eqref{eq:delayconstr}.\\
(iii) Solve the optimization \\ \vspace{-.7cm}
$$\tilde{\theta} = \argmin_{\theta\in\Theta} \bigg[\mathcal{D}(T_{\theta}\#\mu,\mu)+\sum_{i=1}^{\ell} \mathcal{D}(\Psi_{(y_i,T_{\theta})}\#\mu, \hat{\mu}_{y_i})\bigg],$$ \vspace{-.5cm}\\
\noindent where $\mu$ is the measure extracted from $\{x(t_i)\}_{i=1}^N$\\ \noindent and $\Psi_{(y_i,T_{\theta})}$ is the delay map with parameter $\theta$.\\
\noindent\textbf{Output.} Optimal parameters $\tilde{\theta} \in \Theta$.
\end{algorithm}\smallskip

While Algorithm~\ref{alg:1} is designed to handle partial observations and relies on long trajectory simulations to approximate the delay-coordinate invariant measure, Algorithm~\ref{alg:2} assumes access to full-state observations for approximating $\mu$ but does not require long trajectory simulations. Consequently, Algorithm~\ref{alg:2} works stably with gradient-based optimization techniques.

To achieve unique system recovery using Algorithm~\ref{alg:2}, one can additionally incorporate information about the initial condition of the trajectory into the objective function, following Assumption~\ref{item:assumption1} in Theorem~\ref{thm:main2}. For additional details on the implementation of Algorithms~\ref{alg:1} and~\ref{alg:2}, we refer to the \textbf{SM}. \smallskip

\textit{Numerical Experiments.}\----  We present several numerical experiments that demonstrate the effectiveness of Algorithms~\ref{alg:1} and~\ref{alg:2} in modeling sparsely sampled, partially observed, noisy, high-dimensional, and chaotic physical systems. We summarize our findings below, with detailed experimental setups provided in the \textbf{SM}. We also include a GitHub repository~\cite{code} containing tutorials for the implementation of Algorithms \ref{alg:1} and \ref{alg:2} and the complete code for our numerical experiments.

Consider the parameterized Kuramoto--Sivashinsky (KS) equation $u_{t} + \theta(u_{xx} + u_{xxxx}) + uu_x = 0$, where $\theta\in [0.5,1.5].$ In Fig.~\ref{fig:KS}, we use Algorithm~\ref{alg:1} to infer the true parameter $\theta^* = 1$ from partially observed, noisy, and slowly sampled time series. The chaotic divergence of nearby trajectories in the KS equation causes significant difficulties when using an objective that relies on pointwise trajectory matching for the system comparison. In contrast,  our delay-coordinate invariant measure matching approach is effective at inferring the true $\theta$.
\begin{figure}
    \centering
    \includegraphics[width=\linewidth]{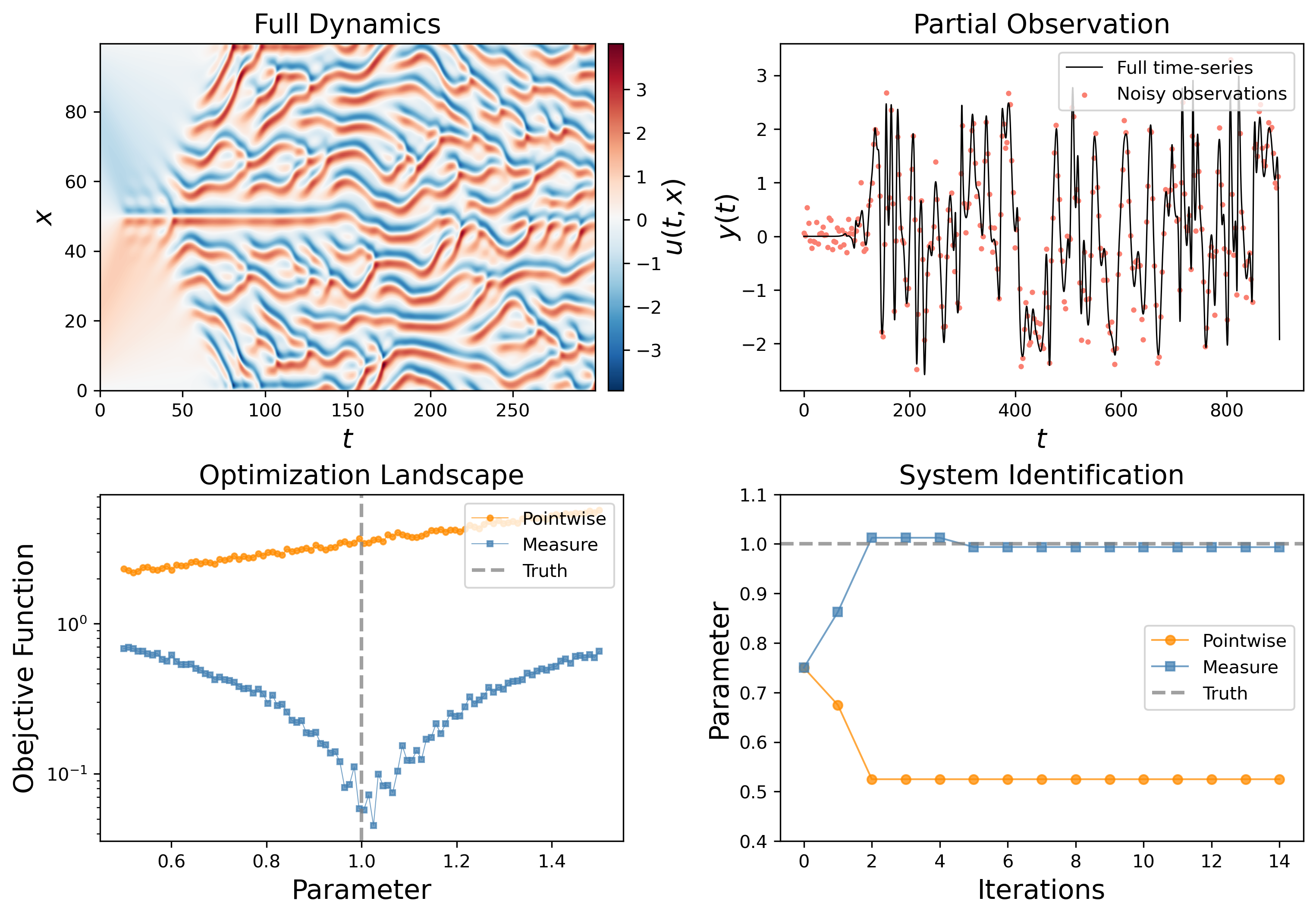}
    \caption{Parameter identification for the KS equation using a delay-coordinate invariant measure objective. Top left: Full, unobserved dynamical system. Top right: The noisy, partially observed inference data. Bottom left: Optimization landscapes of the delay-coordinate invariant measure and pointwise objective functions. Bottom right: Nelder--Mead iterations for minimizing the two objective functions.}
    \label{fig:KS}
\end{figure}
\begin{figure}
      \centering
    \includegraphics[width=\linewidth]{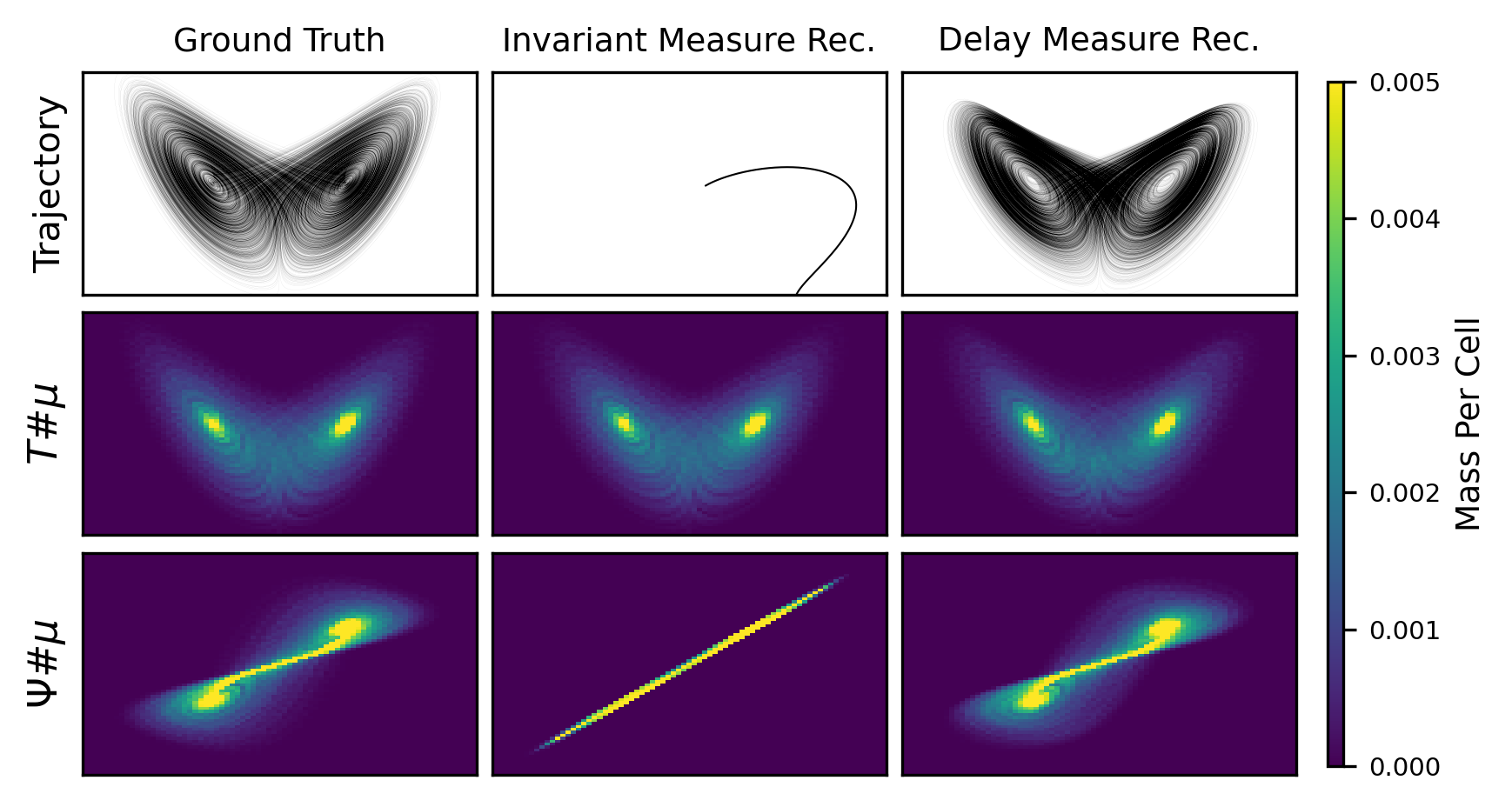}
    \caption{Reconstructing the dynamics of the Lorenz-63 attractor (left column) using invariant measure-based loss functions. A loss function based solely on the state-coordinate invariant measure proves insufficient for reconstructing the dynamics (middle column), whereas the objective introduced in Algorithm~\ref{alg:2} enables an accurate reconstruction (right column).}
    \label{fig:lorenz_compare}
\end{figure}
\begin{figure}
      \centering\hspace{.7cm}
    \includegraphics[width=.9\linewidth,trim={0 0 0 2mm},clip]{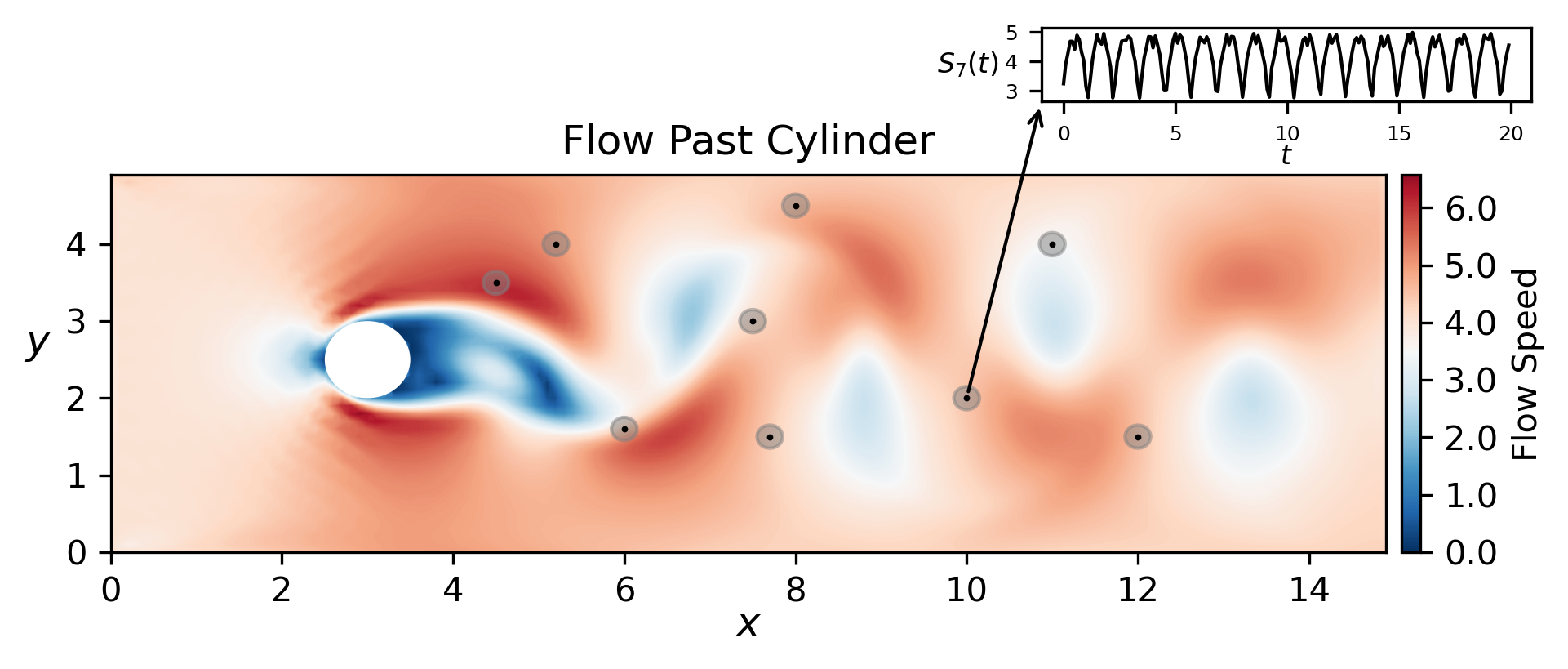}\\
    \vspace{-.1cm}
    \includegraphics[width = .85\linewidth,trim={0 3mm 0 0},clip]{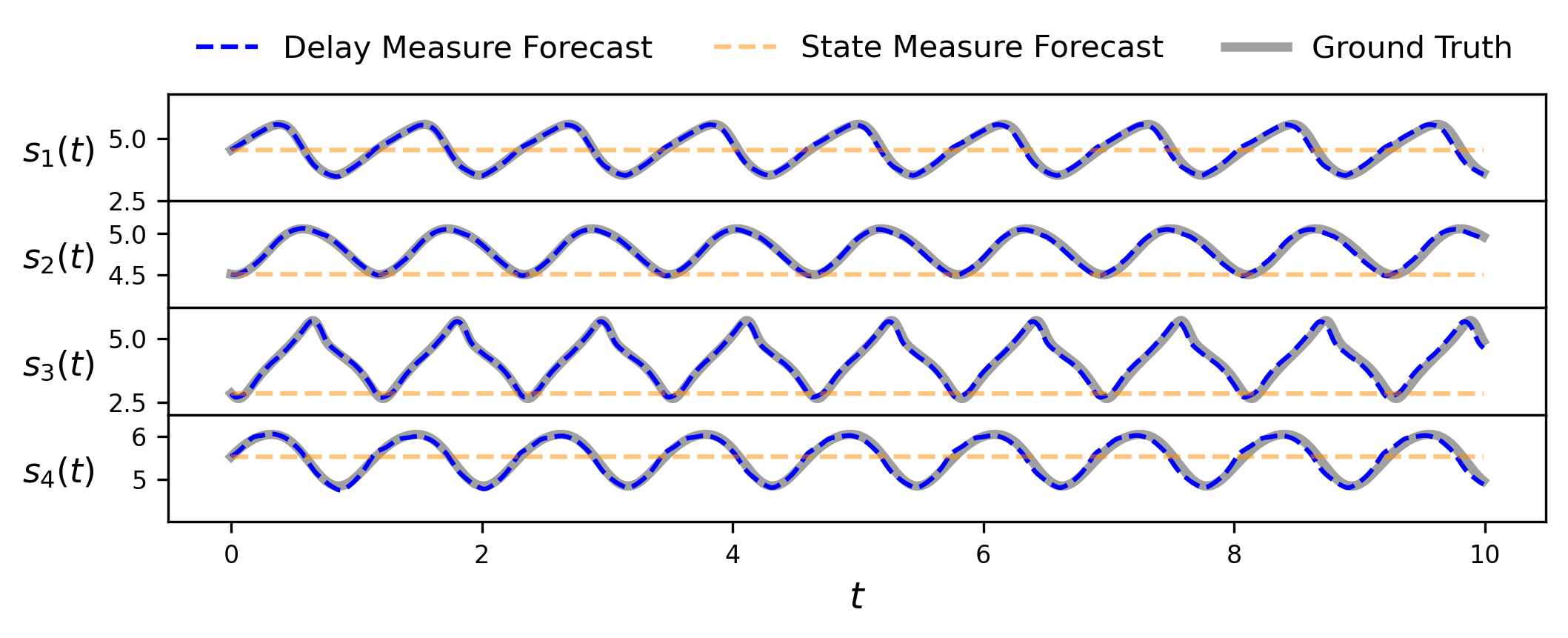}
    \caption{Forecasting the dynamics of noisy observables in a K\'arm\'an vortex street using delay-coordinate invariant measures. Top: Snapshot of the flow and illustration of the partially observed velocity probes. Bottom: Forecast using both state-coordinate and delay-coordinate invariant measures.}
    \label{fig:cylinder}
\end{figure}

We next consider an example involving the canonical Lorenz-63 system, in which  $T_\theta$ is defined as the time-$\Delta t$ flow map of a neural network-parameterized velocity field with $>10^4$ unknown parameters. Due to the high-dimensional parameter space, gradient-based methods are necessary to perform optimization. In Fig.~\ref{fig:lorenz_compare}, we demonstrate the effectiveness of Algorithm~\ref{alg:2} for learning the Lorenz-63 dynamics using delay-coordinate invariant measures. An analogous method based solely on state-coordinate invariant measure matching converges only to the identity map, failing to capture the true dynamics of the attractor. In contrast, our delay-measure loss naturally enforces ergodicity of the reconstructed dynamics (see Corollary~\ref{cor:ergodic}).

Our final example models partial observations of flow past a cylinder at a Reynolds number $\mathrm{Re} = 70$, which exhibits vortex shedding and is subject to challenging data assumptions. A sparse dataset consisting of only $N = 200$ observations is sampled from 7 probes in the K\'arm\'an vortex street, where the probe locations are subject to Gaussian deviations, introducing uncertainty. Using Algorithm~\ref{alg:2}, we construct a neural network model for the observed noisy dynamics, which we assume to collectively make up the full state of a new dynamical system; see Fig.~\ref{fig:cylinder}. Similar to the example shown in Fig.~\ref{fig:lorenz_compare}, modeling the dynamics by matching only the state-coordinate invariant measure fails to capture the true dynamics. Incorporating the trajectory initial condition (see Assumption \ref{item:assumption1}) and delay-coordinate measures matching (see Assumption~\ref{item:assumption2}) in the optimization process is necessary to achieve accurate predictions of the fluid flow sensors.

\smallskip

\textit{Conclusion.}\---- In recent years, measure-theoretic approaches have emerged as essential tools for modeling dynamical systems from imperfect data subject to sparsity, noise, and uncertainty. While many works have leveraged the use of the state-coordinate invariant measure to perform system identification~\cite{botvinick2023learning, yang2023optimal, greve2019data, jiang2023training, schiff2024dyslim, chen2023detecting}, the invariant measure alone cannot distinguish between large classes of models which all share the same asymptotic behavior. To this end, we have developed a theoretical framework that justifies the use of invariant measures in time-delay coordinates for comparing dynamical systems. Our approach retains the benefits of a measure-theoretic approach, e.g., robustness to noise and chaos, while also ensuring the unique identifiability of the underlying dynamical system, up to a topological conjugacy. Moreover, it is computationally cheap and straightforward to form the delay-coordinate invariant measure from partially observed time-series data, making our approach widely applicable across a range of physical applications. We expect our theoretical advances will inspire future work leveraging invariant measures in time-delay coordinates to perform more effective system identification in the face of data uncertainty and noise.

\smallskip
\textit{Acknowledgments.}\----J.~B.-G.~was supported by a fellowship award under contract FA9550-21-F-0003
through the National Defense Science and Engineering Graduate (NDSEG) Fellowship Program,
sponsored by the Air Force Research Laboratory (AFRL), the Office of Naval Research (ONR) and
the Army Research Office. Y.~Y. acknowledges support from National Science Foundation (NSF) grant DMS-2409855 and ONR grant
N00014-24-1-2088. R.~M.~acknowledges support for initial problem concept formulation under Air Force Office of Scientific Research (AFOSR) grant No.~FA9550-23RQCOR007.

\bibliography{bibliography_arxiv}%

\newgeometry{top=1in, bottom=1in, left=1in, right=1in}

\onecolumngrid
\renewcommand{\baselinestretch}{1.5}\normalsize

\renewcommand{\thetheorem}{S\arabic{section}.\arabic{theorem}}

\newtheorem{lemma}{Lemma}[section]

\newtheorem{assumption}{Assumption}[section]
\theoremstyle{definition}
\newtheorem{remark}{Remark}[section]
\newtheorem{definition}{Definition}[section]

\titlespacing*{\section}
{0pt}{1em plus 0.2em minus 0.2em}{0.5em}       %
\titlespacing*{\subsection}
{0pt}{0.8em plus 0.2em minus 0.2em}{0.4em}
\titlespacing*{\subsubsection}
{0pt}{0.6em plus 0.1em minus 0.1em}{0.3em}

\renewcommand{\theproposition}{S\arabic{section}.\arabic{proposition}}
\renewcommand{\thecorollary}{S\arabic{section}.\arabic{corollary}}
\renewcommand{\theremark}{S\arabic{section}.\arabic{remark}}
\renewcommand{\theassumption}{S\arabic{section}.\arabic{assumption}}
\renewcommand{\thedefinition}{S\arabic{section}.\arabic{definition}}
\renewcommand{\thelemma}{S\arabic{section}.\arabic{lemma}}
\renewcommand{\theequation}{S\arabic{equation}}
\setcounter{section}{0}
\setcounter{theorem}{0}
\setcounter{subsection}{0}
\setcounter{proposition}{0}
\setcounter{subsubsection}{0}
\setcounter{equation}{0}
\setcounter{theorem}{0}
\setcounter{lemma}{0}
\setcounter{remark}{0}
\setcounter{definition}{0}
\setcounter{corollary}{0}

\makeatletter
\renewcommand{\thesection}{\arabic{section}}
\renewcommand{\thesubsection}{\arabic{section}.\arabic{subsection}}
\renewcommand{\thesubsubsection}{\arabic{section}.\arabic{subsection}.\arabic{subsubsection}}
\renewcommand{\p@subsection}{}
\renewcommand{\p@subsubsection}{}

\makeatother

\titleformat{\section}{\normalfont\Large\bfseries}{\thesection}{1em}{} 
\titleformat{\subsection}{\normalfont\large\bfseries}{\thesubsection}{1em}{} 
\titleformat{\subsubsection}{\normalfont\normalsize\bfseries}{\thesubsubsection}
{1em}{} 
\makeatother
\newpage
{ \begin{center}
    \huge Supplemental Materials
\end{center}}
\section{Introduction}
In the main text, we presented two central theorems which motivate the use of delay-coordinate invariant measures for performing data-driven system identification. We then used this theory to motivate practical computational algorithms based upon the comparison of simulated and observed delay-coordinate invariant measure, which we deployed across several physical examples. This supplemental document provides additional detail on the theory presented in the main text, the implementation of our computational algorithms, and the setup of our numerical experiments. The organization of the supplemental materials is as follows. In Section \ref{subsec:notation}, we introduce rigorous definitions and background material required for the proofs of our theorems appearing in the main text. In Section~\ref{sec:results}, we then present the full statements and proofs of these results. Finally, in Section \ref{sec:full_exp} we provide additional details on the algorithms and numerical experiments presented in the main text.

\section{Background and Preliminaries}\label{subsec:notation}
This section reviews the essential background and preliminary results necessary for the statements and proofs of our main results.  In Section~\ref{subsec:background1}, we cover pushforward measures, invariant measures, and basins of attraction. In Section~\ref{subsec:delay_measures}, we introduce the time-delay map and the corresponding invariant measure in time-delay coordinates, which we propose to use for system identification. Finally, in Section~\ref{subsec:embedding}, we discuss generalizations of the Whitney and Takens embedding theorems, which ensure that the invariant measure in time-delay coordinates is well-defined. 
While the content presented in Section \ref{subsec:background1} and Section \ref{subsec:delay_measures} shares some overlap with the main text, the formulations we present here are fully rigorous, while the main text instead contains brief sketches which omit detail.

\subsection{Invariant Measures}\label{subsec:background1}
Throughout our proofs, it will be important to reference the support of a given probability measure, which can intuitively be viewed as the region on which the measure is concentrated; see \cite[Theorem 2.1]{parthasarathy2005probability}. 
\begin{definition}[Support of a measure]\label{def:support}
    If $X$ is a Polish space and $\mu\in \mathcal{P}(X)$ is a Borel probability measure, then the \textit{support} of $\mu$ is the unique closed set $\text{supp}(\mu)\subseteq X$ such that $\mu(\text{supp}(\mu)) = 1$, and for any other closed set $C\subseteq X$ satisfying $\mu(C) = 1$, it holds that $\text{supp}(\mu)\subseteq C$.
\end{definition}
We will frequently need to discuss how probability measures are transformed under the action of a measurable function. This concept is formalized with the notion of a pushforward measure. 
\begin{definition}[Pushforward measure]
    Let $X$ and $Y$ be Polish spaces, let $f:X\to Y$ be Borel measurable, and let $\mu\in \mathcal{P}(X).$ Then, the \textit{pushforward} of $\mu$ is the probability measure $f\# \mu \in \mathcal{P}(Y)$, defined by $(f\# \mu)(B) := \mu(f^{-1}(B))$, for all Borel sets $B\subseteq Y$. 
\end{definition}
We next define the notion of an invariant measure, which is a probability measure that remains unchanged under the pushforward of a given self-transformation $T:X\to X$. In the study of dynamical systems, application of the self-transformation $T$ to a state $x\in X$, i.e., $x\mapsto T(x)$, represents the evolution of time. The invariant measure then provides a statistical description of the system's asymptotic behavior.
\begin{definition}[Invariant measure]
    Let $X$ be a Polish space and let $T:X\to X$ be Borel measurable. The probability measure $\mu \in \mathcal{P}(X)$ is said to be \textit{$T$-invariant} (or simply \textit{invariant} when the map $T$ is clear from context) if $T\# \mu = \mu$. 
\end{definition}
Given a dynamical system $T:X\to X$, it is natural to ask which initial conditions $x\in X$ produce trajectories $\{T^k(x)\}_{k\in\mathbb{N}}$ that yield empirical measures which are asymptotic to a given invariant measure. Given that experimentalists primarily observe trajectories of dynamical systems, it is important to understand the extent to which this data provides information about the underlying invariant measure. This leads us to define the basin of attraction.
\begin{definition}[Basin of attraction]\label{def:basin}
    Let $X$ be a Polish space, and let $T:X\to X$ be Borel measurable. The \textit{basin of attraction} of a $T$-invariant measure $\mu\in \mathcal{P}(X)$ with compact support is
   \begin{equation}\label{eq:basin}
       \mathcal{B}_{\mu,T} := \Bigg\{x\in X: \lim_{N\to\infty}\frac{1}{N}\sum_{k=0}^{N-1} \phi(T^k(x)) = \int_X \phi d\mu, \, \forall \phi \in C(X) \Bigg\}.
   \end{equation} 
\end{definition}

\subsection{Invariant Measures in Time-Delay Coordinates}\label{subsec:delay_measures}
Several times throughout this work, we study and compare dynamical systems in transformed coordinate systems. A fundamental notion of equivalence between the dynamical systems $T:X\to X$ and $\widetilde{T}:Y\to Y$ is given by the concept of topological conjugacy.
\begin{definition}[Topological conjugacy]\label{def:conjugacy}
    Let $X$ and $Y$ be Polish spaces, and let $T:X\to X$ and $\widetilde{T}:Y\to Y$ be continuous. The maps $T$ and $\widetilde{T}$ are said to be \textit{topologically conjugate} if there exists a homeomorphism $h: X\to Y$, such that $\widetilde{T} = h\circ T \circ h^{-1}$.
\end{definition}
In Definition \ref{def:conjugacy}, the homeomorphism $h:X\to Y$ is known as the conjugating map. When $T:X\to X$ admits $\mu \in \mathcal{P}(X)$ as an invariant measure, the following proposition, which appears in the main text as Proposition 1, characterizes an invariant measure of the conjugate system $\widetilde{T} = h\circ T \circ h^{-1}$ via a pushforward measure. For completeness, we provide a short proof of this fact. 
\begin{proposition}[Invariant measures under conjugacy]\label{prop:conjugacy}
    Let $X$ and $Y$ be Polish spaces, let $T:X\to X$ be continuous, and let $\mu \in \mathcal{P}(X)$ be $T$-invariant. If $h: X\to Y$ is a homeomorphism, then $h\# \mu\in \mathcal{P}(Y)$ is $\widetilde{T}$-invariant, where $\widetilde{T} = h\circ T \circ h^{-1}$. Moreover, $\mu$ is $T$-ergodic, then $h\# \mu$ is $\widetilde{T}$-ergodic.
\end{proposition}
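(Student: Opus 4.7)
The plan is to unwind the definition of the pushforward measure and exploit the intertwining identity $S\circ h = h\circ T$, which is an immediate consequence of $S = h\circ T\circ h^{-1}$. Since $h$ is a homeomorphism, both $h$ and $h^{-1}$ are continuous and hence Borel measurable, so every pushforward and preimage appearing below is well-defined.

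For the invariance claim, I would take an arbitrary Borel set $B\subseteq Y$ and compute
\[
(S\#(h\#\mu))(B) = (h\#\mu)(S^{-1}(B)) = \mu(h^{-1}(S^{-1}(B))).
\]
Rewriting $h^{-1}\circ S^{-1} = (S\circ h)^{-1} = (h\circ T)^{-1} = T^{-1}\circ h^{-1}$ turns the inner preimage into $T^{-1}(h^{-1}(B))$, and the $T$-invariance of $\mu$ then gives
\[
\mu(T^{-1}(h^{-1}(B))) = \mu(h^{-1}(B)) = (h\#\mu)(B),
\]
so $S\#(h\#\mu) = h\#\mu$.

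For the ergodicity claim, I would assume $\mu$ is $T$-ergodic and let $B\subseteq Y$ be any Borel set with $S^{-1}(B) = B$. Setting $A := h^{-1}(B)$, the same intertwining identity yields
\[
T^{-1}(A) = T^{-1}(h^{-1}(B)) = h^{-1}(S^{-1}(B)) = h^{-1}(B) = A,
\]
so $A$ is $T$-invariant. Ergodicity of $\mu$ therefore forces $\mu(A) \in \{0,1\}$, and since $\mu(A) = (h\#\mu)(B)$ by definition of the pushforward, the conclusion $(h\#\mu)(B)\in\{0,1\}$ follows.

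There is no substantive obstacle; the proof is essentially bookkeeping about pushforwards. The only point worth flagging is that the invertibility and bimeasurability of $h$ are what allow us to pull $S$-invariant sets back to $T$-invariant sets and vice versa — a property which would fail for a merely continuous (non-homeomorphic) semiconjugacy, and which is used implicitly in both parts.
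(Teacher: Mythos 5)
Your proof is correct and follows essentially the same approach as the paper: both unwind the pushforward definition, use the intertwining relation $h^{-1}\circ S^{-1} = T^{-1}\circ h^{-1}$ to transport $S$-invariance and $S$-invariant sets back to the $T$ side, and then invoke $T$-invariance or $T$-ergodicity of $\mu$. The only cosmetic difference is that you phrase the key algebraic step as composition of set-preimage maps while the paper substitutes $S^{-1} = h\circ T^{-1}\circ h^{-1}$ directly and cancels $h^{-1}\circ h$; the content is identical.
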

\begin{proof}
    First, note that for any Borel subset $B\subseteq Y$ it holds that
    \begin{align*}
        (h\# \mu)(\widetilde{T}^{-1}(B)) = \mu(h^{-1}(\widetilde{T}^{-1}(B)) ) &= \mu (h^{-1}(h(T^{-1}(h^{-1}(B))))) \\
        &= \mu (T^{-1}(h^{-1}(B)))= \mu(h^{-1}(B))= (h\#\mu)(B),
    \end{align*}
    and thus $h\#\mu$ is $\widetilde{T}$-invariant. Furthermore, if $\widetilde{T}^{-1}(B) = B$, then it holds that $h(T^{-1}(h^{-1}(B))) = B,$ which means that $T^{-1}(h^{-1}(B)) = h^{-1}(B).$ If $\mu$ is ergodic, this directly implies that $\mu(h^{-1}(B)) \in \{0,1\}$, which gives us that $(h\#\mu)(B) \in \{0,1\}$, completing the proof.
\end{proof}

\begin{remark}\label{rmk:image_support}
In situations when $h:X\to Y$ is a homeomorphism onto its image, it is understood that the inverse map $h^{-1}$ is only well-defined on the set $h(X)\subseteq Y$. Moreover, if $\mu$ is $T$-invariant, then $h\#\mu\in \mathcal{P}(Y)$ can still be viewed as an invariant measure of the corresponding conjugate system $\widetilde{T}:h(X)\to h(X)$, given by $\widetilde{T} = h\circ T\circ h^{-1}$, in the sense that its restriction to the set $h(X)\supseteq \text{supp}(h\#\mu)$ is $\widetilde{T}$-invariant.
\end{remark}
We now recall the definition of the time-delay map, which provides a helpful choice of conjugating map for our purposes. For further discussion and motivation about the benefits of the time-delay map, refer to the main text.
\begin{definition}[Time-delay map]\label{def:time_delay}
    Consider a Polish space $X$, a map $T:X\to X$, an observable function $y:X\to \mathbb{R}$, and the time delay parameter $m\in\mathbb{N}$. The \textit{time-delay map} is defined as 
\begin{equation}\label{eq:delay_map}
    \Psi_{(y,T)}^{(m)}(x):=(y(x),y(T(x)),\dots, y(T^{m-1}(x)))\in \mathbb{R}^m,
    \end{equation}
    for each $x\in X$. 
\end{definition}
In \eqref{eq:delay_map}, we stress the dependence of the time-delay map $\Psi_{(y,T)}^{(m)}$ on the scalar observation function $y:X\to \mathbb{R}$, the underlying system $T:X\to X$, and the dimension $m\in\mathbb{N}$. %
When $T$ is continuous and the time-delay map $\Psi_{(y,T)}^{(m)}$ is injective, one can build a dynamical system in the reconstruction space $\mathbb{R}^m$ based on the following definition.
\begin{definition}[Delay-coordinate dynamics]\label{def:delay_dynamics}
    Consider a Polish space $X$, a map $T:X\to X$, an observable function $y:X\to \mathbb{R}$, and the time delay parameter $m\in\mathbb{N}$. Assume that $\Psi_{(y,T)}^{(m)}:X\to \mathbb{R}^m$ given in~\eqref{eq:delay_map} is injective. Then, the \textit{delay-coordinate dynamics} are given by
    \begin{equation*}\label{eq:tdd}
\hat{T}_{(y,m)}:\Psi_{(y,T)}^{(m)}(X)\to \Psi_{(y,T)}^{(m)}(X),\qquad \hat{T}_{(y,m)}:= \Big[\Psi_{(y,T)}^{(m)}\Big] \circ T \circ \Big[\Psi_{(y,T)}^{(m)}\Big]^{-1}.
    \end{equation*}
\end{definition}

Since the delay-coordinate map $\hat{T}_{(y,m)}$ is conjugate to the state-coordinate map $T:X\to X$, we now revisit Proposition~\ref{prop:conjugacy} to motivate our definition for an invariant measure in time-delay coordinates. When $T\#\mu =\mu$, the invariant measure in time-delay coordinates should be viewed as the corresponding invariant measure of the conjugate system $\hat{T}_{(y,m)}$ given by the pushforward of $\mu$ under the time-delay map~\eqref{def:time_delay}; see Definition \ref{def:delay_im} below.

\begin{definition}[Invariant Measure in Time-Delay Coordinates]\label{def:delay_im}
      Assume that the delay map $\Psi_{(y,T)}^{(m)}:X\to \mathbb{R}^m$ in Definition~\ref{def:time_delay} is injective. Then, the probability measure 
      \begin{equation}\label{eq:tdim}
          \hat{\mu}_{(y,T)}^{(m)}:=\Psi_{(y,T)}^{(m)}\# \mu \in \mathcal{P}(\mathbb{R}^m)
      \end{equation} is the corresponding \textit{invariant measure in the time-delay coordinates.} We also refer to \eqref{eq:tdim} as a \textit{delay-coordinate invariant measure.} 
\end{definition}

\subsection{Embedding Theory}\label{subsec:embedding}

This section gives the complete statements of two embedding theorems crucial for this work. In particular, we will study generalizations of the celebrated embedding theorems due to Whitney and Takens \cite{Takens1981DetectingSA,whitney1936differentiable}. Whitney showed that a generic smooth map from a smooth $n$-dimensional manifold into $\mathbb{R}^{2n+1}$ is an embedding, while Takens showed that the time-delay map from Definition \ref{def:time_delay} is generically an embedding into $\mathbb{R}^{2n+1}$ when $X = M$ is a smooth compact $n$-dimensional manifold, $T\in C^2(M,M)$ is a diffeomorphism, and $y\in C^2(M,\mathbb{R})$. 

In these statements, the phrase ``generic" indicates that the embedding property holds for an open and dense set of functions. Given that practitioners often have little control over the observable $y$, an important part of the delay map~\eqref{eq:delay_map}, the embedding property should hold for a large enough class of $y\in C^2(M,\mathbb{R})$ such that a ``randomly chosen'' observation function can be used to embed the dynamics with high probability.

Since an open and dense set in a topological space can have vanishingly small Lebesgue measure, the original statements by Takens and Whitney are insufficient in this regard \cite{hunt1992prevalence, sauer1991embedology}. In particular, one would like to say that the time-delay map constitutes an embedding for ``almost all" observation functions $y\in C^2(M,\mathbb{R})$. Along these lines, \cite{sauer1991embedology} proves generalization of both Whitney's and Takens' theorems using a measure-theoretic notion of genericity, known as prevalence, which is stronger than the topological genericity that appeared in the original statements~\cite{Takens1981DetectingSA}. We will thus develop our main results based on prevalence.

In Section~\ref{subsubsec:prevalence}, we provide an overview of the notion of prevalence, as well as some foundational results we will employ in this work. In Section~\ref{subsubsec:theorems}, we then provide complete statements of the generalized versions of Takens' and Whitneys' embedding theorems due to Sauer, Yorke, and Casdagli \cite{sauer1991embedology}. Finally, in Section \ref{subsec:borel}, we comment on the Borel-measurability of the set of embeddings, a technical detail needed to prove our main results.

\subsubsection{Prevalence}\label{subsubsec:prevalence}

The definition of prevalence generalizes the notion of ``Lebesgue-almost everywhere'' from finite-dimensional vector spaces to infinite dimensional function vector spaces~\cite{hunt1992prevalence,sauer1991embedology,ott2005prevalence}. Towards this, we first define the Lebesgue measure on an arbitrary finite-dimensional vector space. While such a definition will necessarily depend on a choice of basis for the target space, we will only be interested in whether a given set has zero or positive Lebesgue measure, and this is a property independent of one's choice of basis \cite{hunt1992prevalence, shannon2006prevalent}.

\begin{definition}[Full Lebesgue measure on finite-dimensional vector spaces]\label{def:full_leb}
Let $E$ be a  $k$-dimensional vector space and consider a basis $\{v_1,\ldots,v_k\}\subseteq E$. A set $F\subseteq E$ has full Lebesgue measure if the coefficients $\big\{(a_1,\dots, a_k) \in \mathbb{R}^k: \sum_{i=1}^{k} a_iv_i \in F\big\}$ of its basis expansion have full Lebesgue measure in $\mathbb{R}^k$. 
\end{definition}

We will next define prevalence to generalize Definition \ref{def:full_leb} to infinite dimensional vector spaces. In particular, we will define prevalence for Borel subsets of completely metrizable topological vector spaces. A topological vector space is a vector space in which the operations of vector addition and scalar multiplication are continuous functions. Moreover, the assumption of complete metrizability means that one can endow the topological vector space with a metric, for which every Cauchy sequence is convergent to a limit inside of the vector space. Additionally, this metric can always be chosen to be translation invariant. 
\begin{definition}[Prevalence]\label{def:prevalence}
    Let $V$ be a completely metrizable topological vector space. A Borel subset $S\subseteq V$ is said to be \textit{prevalent} if there is a finite-dimensional subspace $E\subseteq V$, known as a \textit{probe space}, such that for each $v\in V$, it holds that $v+e\in S$ for Lebesgue almost-every $e\in E$. 
\end{definition}
Intuitively, prevalence means that almost all perturbations of an element $v\in V$ by elements of a probe space $E$ necessarily belong to the prevalent set $S$. Note that Definition 
 \ref{def:prevalence} reduces to Definition \ref{def:full_leb} if $V$ is a finite dimensional vector space. Moreover, if $V = C^k$, the space of functions whose $k$-th order derivative exists and is continuous, then it can be shown that a prevalent set is also dense in the $C^k$-topology \cite{hunt1992prevalence}. It also holds that the finite intersection of prevalent sets remains prevalent. The proof of this fact leverages the result that any finite-dimensional subspace containing a probe space is itself a probe space. These results play crucial roles in our main theorems, so we prove them in Lemmas~\ref{lemma:contain_probe} and \ref{lemma:prevalence} for completeness.
\begin{lemma}\label{lemma:contain_probe}
    Let $V$ be a completely metrizable topological vector space, assume $S\subseteq V$ is prevalent, and assume that $E\subseteq V$ is a probe space. If $E'\subseteq V$ is a finite-dimensional vector space with $E\subseteq E'$, then $E'$ is also a probe space.
\end{lemma}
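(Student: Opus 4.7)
The plan is to exploit the direct-sum structure of a finite-dimensional extension together with Fubini's theorem. Concretely, since $E\subseteq E'$ and both are finite-dimensional, I would pick a basis for $E$ and extend it to a basis for $E'$; the span of the new basis vectors gives a complementary subspace $F\subseteq E'$ with $E'=E\oplus F$. Using this basis, Definition \ref{def:full_leb} identifies the Lebesgue measure on $E'$ with the product of the Lebesgue measures on $E$ and $F$ via the linear isomorphism $E\times F\to E'$ given by $(e,f)\mapsto e+f$.

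Next, I would fix an arbitrary $v\in V$ and apply the prevalence hypothesis with probe space $E$ to each shifted vector $v+f\in V$: for every $f\in F$, the slice
\begin{equation*}
A_f:=\bigl\{e\in E: v+f+e\notin S\bigr\}
\end{equation*}
has Lebesgue measure zero in $E$. To combine these slice statements into a single almost-everywhere statement on $E'$, I would invoke Fubini's theorem on $E\times F$ applied to the indicator of the set $A:=\{(e,f)\in E\times F: v+e+f\notin S\}$. Measurability of $A$ follows from the assumption that $S$ is Borel in $V$ (built into Definition \ref{def:prevalence}) together with continuity of the addition map $E\times F\to V$, $(e,f)\mapsto v+e+f$, which makes $A$ the preimage of a Borel set under a continuous (hence Borel) map. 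Fubini then yields that $A$ has product measure zero, so under the identification above, $\{e'\in E': v+e'\notin S\}$ has Lebesgue measure zero in $E'$. Since $v\in V$ was arbitrary, this verifies the defining property of a probe space for $E'$.

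The only subtlety I anticipate is a bookkeeping one rather than a mathematical obstacle: making sure the choice of basis used to define Lebesgue measure on $E'$ is compatible with the product decomposition $E\oplus F$, so that null sets in $E\times F$ correspond to null sets in $E'$. This is handled by remarking, as the paper already does after Definition \ref{def:full_leb}, that the notion of a null (equivalently, full-measure) set is independent of the chosen basis, so it suffices to work in a basis adapted to the direct-sum decomposition. With that observation in place, the argument is essentially a one-line application of Fubini.
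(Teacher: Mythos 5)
Your proposal is correct and follows essentially the same strategy as the paper's proof: extend a basis of $E$ to a basis of $E'$, split $E'$ into $E$ plus a complement, apply the prevalence of $E$ for each fixed value of the complementary coordinates, and then integrate out via Tonelli/Fubini, with Borel measurability of the exceptional set coming from continuity of the affine map into $V$ and the Borel-ness of $S$. The only cosmetic difference is that the paper phrases the splitting in terms of coordinates $(a_1,\dots,a_\ell)\in\mathbb{R}^\ell$ and cites Tonelli (the appropriate version for the nonnegative characteristic function), whereas you phrase it as a direct-sum decomposition $E'=E\oplus F$ and cite Fubini; these are the same argument.
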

\begin{proof}
We will denote $\text{dim}(E) = k$  and $\text{dim}(E') = \ell \geq k$. Let $e_1,\dots, e_{\ell}\in E'$ be basis vectors for $E'$, where $e_1,\dots, e_k\in E$ are basis vectors for $E$. By the assumption that $E$ is a probe space, we know that for all $v\in V$ it holds that $v + \sum_{i=1}^k a_ie_i \in S$ for Lebesgue almost all $(a_1,\dots, a_k)\in \mathbb{R}^k$. Now, let $v\in V$ be fixed and notice that 
  \begin{equation}\label{eq:decompose}
      v+ \sum_{i=1}^{\ell}a_i e_i = \Bigg(v+ \sum_{i = k+1}^{\ell}a_ie_i\Bigg) + \sum_{i=1}^ka_i e_i.
  \end{equation}
  Due to the decomposition \eqref{eq:decompose} and the fact that $E$ is a probe space, it holds for fixed $v\in V$ and $a_{k+1},\dots, a_{\ell}\in \mathbb{R}$ that $v+\sum_{i=1}^{\ell}a_ie_i \in S$ for $\lambda_k$-almost every $(a_1,\dots, a_k)\in \mathbb{R}^k$. 
  
To conclude the proof, it remains to show that $v+\sum_{i=1}^{\ell} a_i e_i \in S$ for $\lambda_{\ell}$-almost every $(a_1,\dots, a_{\ell})\in\mathbb{R}^{\ell}$, which will be a consequence of the Tonelli's theorem~\cite[Theorem.~3.2]{stein2009real}. Towards this, we define the set $B:=\{(a_1,\dots, a_{\ell}): v+\sum_{i=1}^{\ell}a_ie_i \notin S\}\subseteq \mathbb{R}^{\ell}$, and will show $\lambda_{\ell}(B) = 0.$ To apply Tonelli's theorem to  $\chi_B$, the characteristic function of the set $B$, we must first justify that $B$ is Borel measurable. Towards this, define $f(a_1,\dots,a_{\ell}):= v+\sum_{i=1}^{\ell}a_i e_i$,  a continuous function from $\mathbb{R}^{\ell}$ to $V$. 
  Since $S\subseteq V$ is Borel, it holds that $B= f^{-1}(S)$ is Borel as well. 
  
  For notational convenience, we will now rewrite $\chi_B$ as a function $\phi: \mathbb{R}^{k}\times \mathbb{R}^{\ell - k}\to \mathbb{R}$, given by 
  $$\phi(x,y):= \chi_B(x_1,\dots, x_k, y_1,\dots, y_{\ell - k}),\qquad x\in\mathbb{R}^k,\qquad y\in \mathbb{R}^{\ell-k}.$$ Note that for every fixed $y \in \mathbb{R}^{\ell - k}$, $\phi(x,y) = 0$ for $\lambda_{k}$-almost every $x\in \mathbb{R}^k$. Then, it follows by Tonelli's theorem that
  \begin{align*}
      \lambda_{\ell}(B) = \int_{\mathbb{R}^{\ell}}\chi_Bd\lambda_{\ell} = \int_{\mathbb{R}^{\ell -k}}\bigg(\underbrace{ \int_{\mathbb{R}^{k}}  \phi(x,y) d\lambda_{k}(x) }_{ = 0}\bigg)d\lambda_{\ell - k}(y) = 0,
  \end{align*}
  which concludes the proof.
\end{proof}
It is now a direct consequence of Lemma \ref{lemma:contain_probe} that the \text{finite} intersection of prevalent sets is prevalent.
\begin{lemma}\label{lemma:prevalence}
Let $S_1$ and $S_2$ be two prevalent subsets of a completely metrizable topological vector space $V$. Then, the intersection $S_1\cap S_2$ is also prevalent.
\end{lemma}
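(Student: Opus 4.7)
The plan is to build a common probe space for both $S_1$ and $S_2$ and then appeal to Lemma \ref{lemma:contain_probe}. By hypothesis, there exist finite-dimensional probe spaces $E_1, E_2 \subseteq V$ witnessing the prevalence of $S_1$ and $S_2$ respectively. The natural candidate is $E := E_1 + E_2$, which is again a finite-dimensional subspace of $V$ since $\dim(E) \le \dim(E_1) + \dim(E_2) < \infty$, and it contains both $E_1$ and $E_2$.

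Applying Lemma \ref{lemma:contain_probe} twice, once with $E_1 \subseteq E$ and once with $E_2 \subseteq E$, shows that $E$ is simultaneously a probe space for $S_1$ and for $S_2$. Thus, for each fixed $v \in V$, the sets
\begin{equation*}
A_1(v) := \{e \in E : v + e \in S_1\}, \qquad A_2(v) := \{e \in E : v + e \in S_2\}
\end{equation*}
each have full Lebesgue measure in $E$ in the sense of Definition \ref{def:full_leb}. Since Lebesgue measure is finitely additive on complements, the intersection $A_1(v) \cap A_2(v)$ also has full Lebesgue measure in $E$, and this intersection equals $\{e \in E : v + e \in S_1 \cap S_2\}$.

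Finally, $S_1 \cap S_2$ is Borel as the intersection of two Borel sets, which verifies the measurability requirement in Definition \ref{def:prevalence}. Therefore, $E$ is a probe space for $S_1 \cap S_2$, and $S_1 \cap S_2$ is prevalent. I do not anticipate a genuine obstacle here: the only subtle point is ensuring that the common probe space $E$ really serves both sets, and this was precisely arranged by Lemma \ref{lemma:contain_probe}; everything else reduces to the elementary fact that the intersection of two full-measure subsets of $\mathbb{R}^{\dim E}$ has full measure.
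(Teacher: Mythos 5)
Your proof is correct and follows essentially the same route as the paper: both construct a common probe space (your $E_1 + E_2$ is the same as the paper's $\mathrm{span}\{v_1,\dots,v_k,w_1,\dots,w_\ell\}$), apply Lemma~\ref{lemma:contain_probe} to conclude it serves both $S_1$ and $S_2$, and then use the fact that the intersection of two full-measure sets has full measure. You spell out the final steps (Borel measurability of $S_1 \cap S_2$ and the intersection of full-measure sets) more explicitly than the paper, but the argument is identical in substance.
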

\begin{proof}
Let $E_1 = \text{span}\{v_1,\dots, v_k\}$ be a probe space for $S_1$ and  $E_2 = \text{span}\{w_1,\dots, w_{\ell}\}$ be a probe space for $S_2$. 
Define $E':=\text{span}(\{v_1,\dots, v_k, w_1,\dots, w_{\ell}\})$, and consequently $E_1,E_2\subseteq E'.$ Thus, by Lemma~\ref{lemma:contain_probe}, $E'$ is a probe space for both $S_1$ and $S_2$. The fact that $E'$ is a probe space for $S_1\cap S_2$ implies that $S_1\cap S_2$ is prevalent.
\end{proof}

\subsubsection{Generalized Whitney's and Takens' Embedding Theorems}\label{subsubsec:theorems}
In addition to using the theory of prevalence, the results in \cite{sauer1991embedology} generalize the classical Takens and Whitney theorems in another crucial way. It is often the case that dynamical trajectories are asymptotic to a compact attracting set $A$, which has a fractal structure and is not a manifold. The classical Takens and Whitney theorems require that such an attractor is contained within a smooth, compact manifold of dimension $d$ to embed the dynamics in $2d+1$-dimensional reconstruction space. The fractal dimension $d_A$ of the set $A$ might be much less than the manifold dimension, i.e., $d_A \ll d$. In such cases, it is desirable to consider more efficient approaches that can guarantee a system reconstruction in $2d_A+1$-dimensions. Towards this, we now recall the following definition of box-counting dimension, which will serve as our notion for the dimension of a fractal set.
 
 \begin{definition}[Box counting dimension]\label{def:box}
     Let $A\subseteq \mathbb{R}^n$ be a compact set. Its \textit{box counting dimension} is $$\text{boxdim}(A):=\displaystyle\lim_{\varepsilon \to 0} \frac{\log N(\varepsilon)}{\log(1/\varepsilon)},$$ where $N(\varepsilon)$ is the number of boxes with side-length $\varepsilon$ required to cover $A$. When the limit does not exist, one can define the upper and lower box-counting dimensions by replacing the limit with liminf and limsup, respectively.
 \end{definition}
 The following generalization of Whitney's embedding theorem comes from~\cite[Theorem 2.3]{sauer1991embedology}.
\begin{theorem}[Fractal Whitney Embedding]\label{thm:whitney}
    Let $A\subseteq \mathbb{R}^n$ be compact,  $d:= \textup{boxdim}(A)$, and $m > 2d$ be an integer. Then, almost every $F\in C^1(\mathbb{R}^n,\mathbb{R}^m)$ is injective on $A$.
\end{theorem}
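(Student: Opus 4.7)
The plan is to establish prevalence by exhibiting an explicit finite-dimensional probe space. I would take $\mathcal{E}$ to be the $nm$-dimensional space of linear maps $L:\mathbb{R}^n\to\mathbb{R}^m$, identified with $m\times n$ matrices and viewed as a subspace of $C^1(\mathbb{R}^n,\mathbb{R}^m)$. By Definition~\ref{def:prevalence}, prevalence of the set of $F$ injective on $A$ reduces to showing that for every fixed $F\in C^1(\mathbb{R}^n,\mathbb{R}^m)$, the set
\begin{equation*}
    \mathcal{N}_F := \{L\in\mathcal{E} : F+L \text{ is not injective on } A\}
\end{equation*}
has Lebesgue measure zero in $\mathcal{E}\cong \mathbb{R}^{nm}$. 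The Borel measurability of $\mathcal{N}_F$ required for the definition to apply is handled separately in Section~\ref{subsec:borel}.

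The driving observation is that $L\in \mathcal{N}_F$ iff there exist $x\neq y$ in $A$ with $L(x-y)=F(y)-F(x)$. For each such pair, this is an affine subspace of codimension exactly $m$ in $\mathcal{E}$, since $x-y\neq 0$ forces the evaluation $L\mapsto L(x-y)$ to surject onto $\mathbb{R}^m$. Thus $\mathcal{N}_F$ is a union of codimension-$m$ affine subspaces indexed by $(x,y)\in A\times A\setminus \Delta$, a parameter set whose box-counting dimension is at most $2d$ by Definition~\ref{def:box}. Heuristically, this union should have dimension at most $(nm-m)+2d$, strictly less than $nm$ precisely when $m>2d$, yielding Lebesgue measure zero.

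To upgrade this dimension count into a rigorous measure bound, I would work in a Frobenius ball $B_R\subseteq \mathcal{E}$ and produce covers of $\mathcal{N}_F\cap B_R$ at all dyadic scales. At scale $r_k=2^{-k}$, I cover $A$ by $\lesssim r_k^{-(d+\eta)}$ boxes for arbitrarily small $\eta>0$ (by Definition~\ref{def:box}), bin pairs $(x,y)$ by dyadic separation $|x-y|\sim r_k$, and show that for each pair of $r_k$-boxes the approximate coincidence condition cuts out a tube in $B_R$ whose volume combines a codimension-$m$ factor with the separation scale, totaling $\lesssim r_k^{m-2d-O(\eta)}$, which vanishes as $k\to \infty$ whenever $m>2d$.

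The main obstacle I expect is the delicate bookkeeping in this multi-scale covering, particularly because pairs with very small separation $|x-y|$ would naively produce codimension-$m$ tubes that are geometrically wider in $\mathcal{E}$ per pair, and a single-scale cover would diverge. The mechanism that rescues the estimate is matched-scale binning: by tying the covering scale to the separation scale and using $F\in C^1$ to absorb the image-space slack $F(x)-F(y)=O(|x-y|)$ into the same exponent, both factors scale with $r_k$, so the per-pair volume $r_k^m$ multiplied by the pair count $\lesssim r_k^{-2(d+\eta)}$ gives a net exponent $m-2d-O(\eta)>0$. This positivity is precisely where the hypothesis $m>2d$ enters, so summing over scales yields a convergent bound that forces $\mathcal{N}_F\cap B_R$ to be null for every $R$, completing the prevalence proof.
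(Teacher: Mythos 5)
The paper does not prove this statement; it cites it as Theorem~2.3 of Sauer, Yorke, and Casdagli, and Remark~\ref{rmk:linear} records that linear maps furnish a probe space, so your choice of $\mathcal{E}$ and the overall covering strategy are consistent with the cited source. The gap is in your matched-scale tube estimate, and it is not a bookkeeping issue but a fatal one. Fix a pair of $r_k$-boxes $B_1, B_2$ at mutual distance $\sim r_k$, choose $x_0\in B_1\cap A$ and $y_0\in B_2\cap A$, and set $v_0 := x_0 - y_0$, so $|v_0|\sim r_k$. Every $L\in B_R$ witnessing a coincidence at some $(x,y)\in(B_1\cap A)\times(B_2\cap A)$ lies in the tube $\{L : |Lv_0 - (F(y_0)-F(x_0))| \leq C\,r_k\}$ with $C$ depending only on $R$ and the local Lipschitz constant of $F$. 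But the linear map $L\mapsto Lv_0$ from $\mathcal{E}\cong\mathbb{R}^{nm}$ onto $\mathbb{R}^m$ has all $m$ of its nonzero singular values equal to $|v_0|$, so the Lebesgue measure of this tube inside $B_R$ is $\lesssim R^{nm-m}(Cr_k/|v_0|)^m$, and with $|v_0|\sim r_k$ this is of order one, not $r_k^m$. The image-space slack $|F(x)-F(y)|=O(|x-y|)$ you invoke is exactly cancelled by the shrinking preimage direction $|v_0|\sim r_k$: matching the covering scale to the separation scale is precisely what makes the tube fat rather than thin. Multiplying by $\lesssim r_k^{-2(d+\eta)}$ box pairs then yields a divergent per-scale bound, and summing over $k$ gives nothing.

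The argument is rescued by decoupling the two scales rather than matching them. For $\delta > 0$ set $\mathcal{N}_F^\delta := \{L : \exists\, x,y\in A,\ |x-y|\geq\delta,\ (F+L)(x)=(F+L)(y)\}$, and cover $A$ by boxes of side $r$ with $r$ much smaller than $\delta$. Every pair of boxes that can contain such $(x,y)$ then has $|v_0|\geq\delta/2$, so each tube has measure $\lesssim R^{nm-m}(r/\delta)^m$, and summing over $\lesssim r^{-2(d+\eta)}$ pairs gives $\lambda(\mathcal{N}_F^\delta\cap B_R)\lesssim r^{m-2d-2\eta}\,\delta^{-m}R^{nm-m}$, which tends to $0$ as $r\to 0$ once $\eta < (m-2d)/2$. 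Hence each $\mathcal{N}_F^\delta$ is Lebesgue-null, and since any failure of injectivity occurs at a pair with strictly positive separation, $\mathcal{N}_F=\bigcup_{n\geq 1}\mathcal{N}_F^{1/n}$ is a countable union of null sets and therefore null. The small-separation pairs are never estimated directly; they are absorbed by the countable limit $\delta\to 0$, not by matched-scale binning.
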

\begin{remark}\label{rmk:linear}
    The probe space for the prevalent set appearing in Theorem \ref{thm:whitney} can be chosen as the $nm$-dimensional space of linear maps between $\mathbb{R}^n$ and $\mathbb{R}^m.$
\end{remark}
There are certain assumptions required on the periodic points of a dynamical system for Takens' theorem to hold. Assumption \ref{assumption:1} makes these technical assumptions concise and easy to reference in our main results. In what follows, $DT^p$ denotes the derivative of the $p$-fold composition map $T^p$.
\begin{assumption}[Technical assumption on periodic points]\label{assumption:1}
  Let $T:U\to U$ be a diffeomorphism of an open set $U\subseteq \mathbb{R}^n$, $A\subseteq U$ be compact, and $m\in\mathbb{N}$. For each $p\leq m$:
  \begin{enumerate}
      \item The set $A_p\subseteq A$ of $p$-periodic points satisfies $\textup{boxdim}(A_p) < p/2$.
      \item The linearization $DT^p$ of each periodic orbit has distinct eigenvalues. %
  \end{enumerate}
  \end{assumption}

  We remark that when the diffeomorphism $T$ is given by the time-$\tau$ flow map of a Lipschitz continuous vector field, one can choose $\tau$ sufficiently small such that Assumption \ref{assumption:1} is satisfied; see \cite{sauer1991embedology}. The following generalization of Takens' theorem can be found in \cite[Theorem 2.7]{sauer1991embedology}.
\begin{theorem}[Fractal Takens' Embedding]\label{thm:takens}
    Let $T:U \to U$ be a diffeomorphism of an open set $U\subseteq \mathbb{R}^n$, $A\subseteq U$ be compact and  $m > 2d$ where $d=\textup{boxdim}(A)$. Assume that the periodic points of $T$ with degree at most $m$ satisfy Assumption~\ref{assumption:1} on $A$. Then, it holds that $\Psi_{(y,T)}^{(m)}$ is injective on $A$, for almost all $y\in C^1(U,\mathbb{R}).$ 
\end{theorem}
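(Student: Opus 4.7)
The plan is to establish prevalence of the set $S \subseteq C^1(U,\mathbb{R})$ of observables $y$ for which $\Psi_{(y,T)}^{(m)}$ is injective on $A$, following the strategy of Sauer, Yorke, and Casdagli. First I would fix a finite-dimensional probe space $E \subseteq C^1(U,\mathbb{R})$; a natural choice is the space of polynomials $\mathbb{R}^n \to \mathbb{R}$ of degree at most $2m$ restricted to $U$. For an arbitrary fixed $y_0 \in C^1(U,\mathbb{R})$ I then need to show that $\{e \in E : \Psi_{(y_0+e,T)}^{(m)} \text{ fails injectivity on } A\}$ has Lebesgue measure zero in $E$. A preliminary verification that this set is Borel, using continuity of the evaluation map and compactness of $A$ in the spirit of the measurability step in the proof of Lemma~\ref{lemma:contain_probe}, then triggers the prevalence machinery of Definition~\ref{def:prevalence}.

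Next I would study the off-diagonal ``bad pair'' set $B \subseteq A \times A$ consisting of $(x,x')$ with $x \neq x'$ satisfying $(y_0+e)(T^i(x)) = (y_0+e)(T^i(x'))$ for all $i=0,\dots,m-1$. Decompose $B$ according to the periodicity structure: a ``non-periodic'' stratum where neither $x$ nor $x'$ lies on a periodic orbit of period at most $m$, and a ``periodic'' stratum where at least one does. In the non-periodic stratum the $2m$ iterates $x, T(x), \dots, T^{m-1}(x), x', T(x'),\dots, T^{m-1}(x')$ are pairwise distinct, so polynomial evaluation at them is surjective onto $\mathbb{R}^{2m}$; the $m$ equations $y(T^i(x)) - y(T^i(x'))=0$ are therefore linearly independent in $e$, cutting out an affine subspace of codimension $m$ in $E$.

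A Fubini-type counting argument then closes the non-periodic case: cover $A \times A$, which has box-counting dimension at most $2d$, by at most $C\varepsilon^{-2d}$ boxes of side $\varepsilon$; on each box the bad $e$'s lie within an $O(\varepsilon)$-neighborhood of a codimension-$m$ affine subspace of $E$, a set of $E$-measure $\lesssim \varepsilon^m$. Summing over boxes bounds the total bad measure by a constant multiple of $\varepsilon^{m-2d}$, which tends to zero as $\varepsilon \to 0$ because $m>2d$. Thus the non-periodic stratum contributes zero Lebesgue measure in $E$.

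The main obstacle is the periodic stratum, where the $m$ constraints degenerate. If $x$ is $p$-periodic with $p\le m$, then $y(T^i(x))$ repeats with period $p$ and only $p$ of the $m$ scalar equations are independent, which a priori wrecks the codimension budget. Here Assumption~\ref{assumption:1} intervenes on two fronts: the dimensional bound $\textup{boxdim}(A_p) < p/2$ sharpens the box count on periodic substrata (so that $A_p \times A$ has box-counting dimension strictly less than $p/2 + d$, matching the $p$ surviving constraints against a dimension budget of $2d < m$), while the distinct-eigenvalue hypothesis on $DT^p$ permits local linearization around each periodic orbit and, by enlarging the probe space to include sufficiently many higher-order polynomial jets, recovers additional independent conditions on $e$ that separate neighboring periodic orbits. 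Treating the subcases ($x$ periodic with $x'$ aperiodic, both periodic with distinct orbits, both on the same orbit at different iterates, etc.) separately and invoking Lemma~\ref{lemma:prevalence} to combine the finitely many resulting zero-measure strata yields prevalence of the injectivity set, completing the proof.
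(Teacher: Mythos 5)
The paper does not prove Theorem~\ref{thm:takens}: it cites it as \cite[Theorem 2.7]{sauer1991embedology} and uses it as a black box. There is therefore no in-paper proof for your attempt to be compared against; what can be assessed is whether your sketch faithfully reconstructs the Sauer--Yorke--Casdagli argument that the paper relies on.

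Your outline captures the right mechanism in the generic stratum: polynomials of degree at most $2m$ as the probe space (matching the paper's own remark after the theorem), the $m$ independent linear constraints on $e$ arising from evaluation at $2m$ distinct points, and the Fubini/box-counting estimate $\varepsilon^{-2d}\cdot\varepsilon^{m}\to 0$ using $m>2d$. However, the case decomposition as you state it has a hole. Requiring only that neither $x$ nor $x'$ be periodic with period $\le m$ does \emph{not} make the $2m$ iterates $x, T(x), \dots, T^{m-1}(x), x', \dots, T^{m-1}(x')$ pairwise distinct: a wandering $x$ with $x'=T^{k}(x)$ for some $1\le k\le m-1$ gives overlapping orbit segments, the linear system in $e$ drops below rank $m$, and the codimension count you invoke breaks down. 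This overlapping-but-aperiodic case needs its own stratum and its own estimate, and it is one of the more delicate parts of the SYC proof. On the periodic stratum your account is also loose: ``enlarging the probe space to include sufficiently many higher-order polynomial jets'' mid-argument is not permitted in a prevalence proof, since the probe space must be fixed once and for all; and the precise interplay between the covering bound $\textup{boxdim}(A_p)<p/2$, the $p$ constraints that survive the periodic degeneracy, and the distinct-eigenvalue hypothesis on $DT^{p}$ is considerably more intricate than the sketch suggests. None of this affects the present paper, which only invokes the theorem, but as written your sketch would not close to a complete proof without filling these gaps.
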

\begin{remark}
    The probe space for the prevalent set appearing in Theorem \ref{thm:takens} can be chosen as the space of polynomials for $n$ variables of degree at most $2m$.
\end{remark}
 We note that when the box-counting dimension does not exist, the generalized version of the Takens' and Whitney's embedding theorems (see Theorems \ref{thm:whitney} and \ref{thm:takens}) hold in terms of the lower box-counting dimension (see Definition \ref{def:box}). Moreover, the statements of Theorems \ref{thm:whitney} and \ref{thm:takens} in~\cite{sauer1991embedology} indicate that $F$ and $\Psi_{(y,T)}^{(m)}$ are also immersions on each compact subset of a smooth manifold contained in $A$, in addition to being injections. We do not need this property for our proofs, so we have left it out of the statements of Theorems~\ref{thm:whitney} and~\ref{thm:takens} for simplicity.

\subsubsection{On the Borel-Measurability of the Set of Embeddings}\label{subsec:borel}
It is important to understand how Theorems \ref{thm:whitney} and \ref{thm:takens} can be precisely understood within the theory of prevalence. In particular, Definition \ref{def:prevalence} requires that a prevalent set is a Borel subset of completely metrizable topological vector space. While \cite{sauer1991embedology} does not discuss the measurability of the sets of functions appearing in Theorems \ref{thm:whitney} and \ref{thm:takens}, this property is essential for the proofs of our main results. We leverage Lemma \ref{lemma:prevalence} to construct new prevalent sets via the intersection operation. This construction is made possible by Tonelli's theorem, which we can apply when the prevalent sets in question are Borel measurable (see our proof of Lemma~\ref{lemma:prevalence}). 

In this section, we comment on the structure of $C^1(U,\mathbb{R}^k)$ as a completely metrizable topological vector space, and we explicitly write down an appropriate translation-invariant metric on $C^1(U,\mathbb{R}^k)$. We will then move on to verify that the sets of functions appearing in the statements of Theorems~\ref{thm:whitney} and \ref{thm:takens}, for which the conclusions of injectivity are satisfied, are indeed Borel sets in this topology.

\begin{remark}\label{rmk:metric}
    For any open subset $U\subseteq \mathbb{R}^n$, we remark that $C^r(U,\mathbb{R}^k)$ is a completely metrizable topological vector space.  The construction we provide here follows \cite{Alazard2024}. In particular, consider a sequence of compact sets $K_j \subseteq U$, such that $K_j\subseteq \text{int}(K_{j+1})$ and $\bigcup_{j=1}^{\infty}K_j = U$.  Then define the metric $\textrm{d}_{C^r}:C^r(U,\mathbb{R}^k)\times C^r(U,\mathbb{R}^k) \to [0,\infty)$ by setting
    $$\textrm{d}_{C^r}(f,g):=  \sum_{j=1}^{\infty}\frac{1}{2^j} \frac{\textrm{d}_{j}(f,g)}{1+\textrm{d}_{j}(f,g)},\qquad \textrm{d}_{j}(f,g):=\max_{|\alpha|\leq r}\sup_{x\in K_j}\|\partial^{\alpha} f(x) -\partial^{\alpha} g(x)\|_2,\qquad f,g\in C^r(U,\mathbb{R}^k),$$
    where $\|\cdot\|_2$ denotes the Euclidean $2$-norm and $\alpha\in\mathbb{N}^{n}$ is a multi-index. Then, $C^r(U,\mathbb{R}^k)$ is complete with respect to the metric $\textrm{d}_{C^r},$ which is translation-invariant and continuous with respect to both scalar multiplication and vector addition \cite[Proposition 1.25]{Alazard2024}. 
\end{remark}
We will next verify that the set of smooth maps that are injective on a compact set $A\subseteq \mathbb{R}^n$ is indeed a Borel set with respect to the metric-topology induced by $\textrm{d}_{C^1}$; see Remark \ref{rmk:metric} .
\begin{proposition}\label{prop:borel1}
    Let $U\subseteq \mathbb{R}^n$ be an open set,  $A\subseteq U$ be compact, and $k\in\mathbb{N}$. Then, the set $\{f\in C^1(U,\mathbb{R}^k): f\textup{ is injective on }A\}$ is a Borel subset of $C^1(U,\mathbb{R}^k)$.
\end{proposition}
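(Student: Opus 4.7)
The plan is to show that the complement, namely the set of maps in $C^1(U,\mathbb{R}^k)$ that fail to be injective on $A$, is an $F_\sigma$ set, and therefore Borel. Failure of injectivity on $A$ means that there exist distinct points $x,y\in A$ with $f(x)=f(y)$. To avoid dealing with the non-compact set $\{(x,y)\in A\times A : x\neq y\}$ directly, I will localize the separation by a positive parameter. Specifically, for each integer $n\geq 1$, I would define
$$A_n := \{(x,y)\in A\times A : \|x-y\|_2 \geq 1/n\},$$
which is a closed subset of the compact set $A\times A$, hence compact. Observing that $\bigcup_{n\geq 1}A_n = \{(x,y)\in A\times A:x\neq y\}$, the non-injective set may be written as
$$\textstyle\bigcup_{n\geq 1}\bigl\{f\in C^1(U,\mathbb{R}^k) : \exists (x,y)\in A_n \text{ with } f(x)=f(y)\bigr\}.$$

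Next, for each $n$, I would introduce the functional $\phi_n:C^1(U,\mathbb{R}^k)\to[0,\infty)$ given by
$$\phi_n(f) := \min_{(x,y)\in A_n}\|f(x)-f(y)\|_2,$$
which is well-defined since $A_n$ is compact and the integrand is continuous. The key observation is that the $n$-th set above equals $\phi_n^{-1}(\{0\})$. I then need to verify that $\phi_n$ is continuous with respect to the metric $\mathrm{d}_{C^1}$ from Remark \ref{rmk:metric}. Since $A$ is compact and $U=\bigcup_j K_j$ with $K_j\subseteq \mathrm{int}(K_{j+1})$, there exists $j_0$ such that $A\subseteq K_{j_0}$; thus $\mathrm{d}_{C^1}$-convergence implies uniform convergence on $A$. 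The triangle inequality then gives, for any $f,g\in C^1(U,\mathbb{R}^k)$ and any $(x,y)\in A_n$,
$$\bigl|\,\|f(x)-f(y)\|_2 - \|g(x)-g(y)\|_2\,\bigr| \leq 2\sup_{z\in A}\|f(z)-g(z)\|_2,$$
and taking the minimum over $A_n$ on both sides yields $|\phi_n(f)-\phi_n(g)|\leq 2\sup_{z\in A}\|f(z)-g(z)\|_2$. This bound establishes the desired continuity.

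Once continuity of $\phi_n$ is in hand, $\phi_n^{-1}(\{0\})$ is a closed, hence Borel, subset of $C^1(U,\mathbb{R}^k)$. The set of non-injective maps on $A$ is then the countable union $\bigcup_{n\geq 1}\phi_n^{-1}(\{0\})$, so it is an $F_\sigma$ set, and its complement, the set of maps injective on $A$, is a $G_\delta$ set, hence Borel. The main technical obstacle is making the continuity of $\phi_n$ rigorous with respect to the somewhat intricate metric $\mathrm{d}_{C^1}$, but this reduces cleanly to uniform convergence on $A$ via the exhaustion $K_j$ and only uses the $0$-th order part of the metric; the derivative contributions play no role here.
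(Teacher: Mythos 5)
Your proof is correct and takes essentially the same approach as the paper: your compact sets $A_n$ coincide exactly with the paper's $(A\times A)\setminus D_{1/n}$, and both arguments exhibit the set of injective maps as the countable intersection over $n$ of the open condition ``$\|f(x)-f(y)\|_2>0$ for all $(x,y)\in A_n$.'' Your repackaging of the openness step as continuity of the min-functional $\phi_n$---whose Lipschitz bound in the sup-norm on $A$ you correctly reduce to the zeroth-order part of $\textrm{d}_{C^1}$ via the exhaustion $K_j$---is a slightly cleaner route to the same conclusion as the paper's explicit open-ball construction.
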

\begin{proof}
    For $\varepsilon > 0$, define the set $D_{\varepsilon}:=\{(x,y)\in U\times U: \|x-y\|_2 < \varepsilon\}$ and note that $D_{\varepsilon}$ is open. To see this, observe that $d:U\times U \to \mathbb{R}$, given by $d(x,y):= \|x-y\|_2$ is continuous and that $D_{\varepsilon} = d^{-1}((-\infty,\varepsilon))$ is the preimage of an open set under a continuous function, and is hence open. Next, consider the set 
    $$\bm{B}_{\varepsilon}:=\{f\in C^1(U,\mathbb{R}^k): \|f(x) - f(y)\|_2 > 0 ,\, (x,y)\in (A\times A)\setminus D_{\varepsilon}\}.$$
    We claim that $\bm{B}_{\varepsilon}$ is an open set. To see this, we will fix $f\in \bm{B}_{\varepsilon}$ and produce an open ball of radius $\gamma > 0$, which is centered at $f$ and fully contained within $\bm{B}_{\varepsilon}$.  Towards this, let $K_j\subseteq U$ be a sequence of compact sets with $K_j\subseteq \text{int}(K_{j+1})$ and $\bigcup_{j\in\mathbb{N}}K_j = U$. Now, let $j\in \mathbb{N}$ be chosen such that $K_j\subseteq U$ is a compact set satisfying $A\subseteq K_j$. Furthermore, note that since the map $(x,y)\mapsto \|f(x)-f(y)\|_2$ is continuous over the compact set $(A\times A)\setminus D_{\varepsilon}$, there must exist some $a \in (0,1)$, such that  $\|f(x)-f(y)\|_2 > a >0,$ for all $(x,y) \in (A\times A)\setminus D_{\varepsilon}$. Now, choose $\gamma:= a/2^{j+1} > 0$. Then, for any $g\in C^1(U,\mathbb{R}^k)$ satisfying $\textrm{d}_{C^1}(f,g) < \gamma$, following the definition of $\textrm{d}_{C^1}$ (see Remark \ref{rmk:metric}), we have that for any $x\in A$ that
    $$\frac{1}{2^j}\frac{\|f(x)-g(x)\|_2}{1+ \|f(x)-g(x)\|_2} \leq \mathrm{d}_{C^1}(f,g) < \gamma,$$
    which rearranges to yield $$\|f(x) - g(x)\|_2 < \frac{\gamma 2^j}{1-\gamma 2^j} = \frac{a/2}{1-a/2}< \frac{a}{2}.$$
    Thus, for all $(x,y)\in (A\times A)\setminus D_{\varepsilon}$, it follows that \begin{align*}
      a &<   \|f(x) - f(y) \|_2\\ &\leq \|f(x) - g(x)\|_2 + \|f(y) - g(y)\|_2 +\|g(x) - g(y)\|_2 \\
        & < \frac{a}{2} + \frac{a}{2} + \|g(x) - g(y)\|_2,
   \end{align*} 
   which implies $\|g(x) - g(y)\|_2 > 0.$ By definition, we have $g\in \bm{B}_{\varepsilon}$. Since $g$ was arbitrary, we have $\{g\in C^1(U,\mathbb{R}^k): \textrm{d}_{C^1}(f,g) < \gamma\} \subset \bm{B}_{\varepsilon}$. Since  $f\in\mathbf{B}_{\varepsilon}$ was arbitrary,  we verified that $\bm{B}_{\varepsilon}$ is open. 
   
   Next, we claim that 
\begin{equation}\label{eq:set_inc}
       \{ f\in C^1(U,\mathbb{R}^k): f \text{ is injective on } A\} =  \bigcap_{n\in \mathbb{N}} \bm{B}_{1/n}.
   \end{equation}
   If we can establish \eqref{eq:set_inc}, then we have written the set of injective functions as a countable intersection of open sets, which will conclude our proof. We will  verify that \eqref{eq:set_inc} holds by showing both set inclusions.
   
 We first prove the ``$\subseteq $" direction. Assume that $f\in C^1(U,\mathbb{R}^k)$ is injective on $A$. Defining $D:=\{(x,x):x\in A\}$, it holds by definition of injectivity that $\|f(x)-f(y)\|_2 > 0$, for all $(x,y) \in (A\times A)\setminus D$. In particular, since $D \subseteq D_{\varepsilon}$ for all $\varepsilon > 0$ it holds that $\|f(x) - f(y)\|_2 > 0$, for all $(x,y)\in (A\times A)\setminus D_{\varepsilon}$, for any $\varepsilon >0$, and thus, $f\in \bm{B}_{1/n}$, for all $n\in\mathbb{N}$, i.e., $f\in \bigcap_{n\in \mathbb{N}}\bm{B}_{1/n}.$
 
 We now prove the ``$\supseteq $" direction. Assume that $f\in C^1(U,\mathbb{R}^k)$ satisfies $\|f(x) - f(y)\|_2 > 0$ for all $(x,y)\in (A\times A)\setminus D_{1/n}$, for each $n\in\mathbb{N}$. Let $(x_0,y_0) \in A\times A$ be fixed and $x_0 \neq y_0$. If we choose $N_0\in\mathbb{N}$ to satisfy $N_0>1/\|x_0-y_0\|_2$, then it holds that $(x_0,y_0)\in (A\times A)\setminus D_{1/N_0}$ and therefore $\|f(x_0) - f(y_0)\|_2 > 0$, which means $f(x_0)\neq f(y_0).$ Since $(x_0,y_0)\in A\times A$ where $x_0\neq y_0$ was chosen arbitrarily, we have by definition that $f$ is injective on $A$, which concludes the proof.
\end{proof}
We next verify that the set of observation functions $y\in C^1(U,\mathbb{R})$, such that the delay map $\Psi_{(y,T)}^{(k)}$ is injective on a compact set $A\subseteq U$ is also Borel in the metric-topology induced by $\textrm{d}_{C^1}.$ Throughout our proof, we use the fact that the convergence $\lim_{n\to\infty}\textrm{d}_{C^r}(f_n,g)= 0$ is equivalent to the convergence $\lim_{n\to\infty}\textrm{d}_j(f_n,g)= 0$, for all $j\in\mathbb{N}$; see \cite[Proposition 1.23]{Alazard2024}.
\begin{proposition}\label{prop:borel2}
    Let $U\subseteq \mathbb{R}^n$ be open, $A\subseteq U$ be compact, $T\in C^1(U,U)$ be a diffeomorphism, and $k\in \mathbb{N}$. Then, the set $\{y\in C^1(U,\mathbb{R}): \Psi_{(y,T)}^{(k)} \text{ is injective on }A\}$ is Borel.
\end{proposition}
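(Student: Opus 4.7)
The plan is to reduce this statement to Proposition \ref{prop:borel1} by exhibiting a continuous linear map whose preimage relates the two injectivity conditions. Concretely, I define
$$\Phi : C^1(U,\mathbb{R}) \to C^1(U,\mathbb{R}^k), \qquad \Phi(y) := \Psi_{(y,T)}^{(k)}.$$
Since $T \in C^1(U,U)$ is a diffeomorphism, each iterate $T^i$ lies in $C^1(U,U)$, and by the chain rule each component $y \circ T^i$ belongs to $C^1(U,\mathbb{R})$, so $\Phi$ is well-defined, and linearity is immediate. If I can show that $\Phi$ is continuous with respect to $\textrm{d}_{C^1}$ on both the source and target, then $\Phi$ is Borel measurable. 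Since Proposition \ref{prop:borel1} asserts that $\mathcal{I}_A := \{f \in C^1(U,\mathbb{R}^k) : f \text{ is injective on } A\}$ is a Borel set, it follows that
$$\Phi^{-1}(\mathcal{I}_A) \;=\; \{y \in C^1(U,\mathbb{R}) : \Psi_{(y,T)}^{(k)} \text{ is injective on } A\}$$
is a Borel subset of $C^1(U,\mathbb{R})$, which is exactly the claim.

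The core step is therefore to verify continuity of $\Phi$. Using the characterization that $\textrm{d}_{C^1}(y_n,y) \to 0$ iff $\textrm{d}_j(y_n,y) \to 0$ for every $j \in \mathbb{N}$, it suffices to show that $\textrm{d}_j(\Phi(y_n), \Phi(y)) \to 0$ for each fixed $j$. The $i$-th coordinate of $\Phi(y_n) - \Phi(y)$ is $(y_n - y) \circ T^i$. The $C^0$ contribution to $\textrm{d}_j$ is bounded by
$$\sup_{x \in K_j} \|\Phi(y_n)(x) - \Phi(y)(x)\|_2 \;\leq\; \sqrt{k}\, \max_{0 \leq i \leq k-1} \sup_{z \in T^i(K_j)} |y_n(z) - y(z)|,$$
and the first-derivative contribution is controlled via the chain rule $D\bigl((y_n - y)\circ T^i\bigr)(x) = D(y_n - y)(T^i(x)) \cdot DT^i(x)$, so its supremum over $K_j$ is bounded by a finite constant depending only on $T$, $i$, and $K_j$, times the supremum of $\|D(y_n - y)\|_2$ over $T^i(K_j)$. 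Because $T^i(K_j) \subset U$ is compact, there exists $j'$ with $T^i(K_j) \subseteq K_{j'}$ for every $i = 0,\dots,k-1$ simultaneously, and hence both factors are controlled by $\textrm{d}_{j'}(y_n,y)$, which tends to $0$.

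The main obstacle is the bookkeeping between the compact exhaustions of $U$ and their images under the iterates $T^i$; since $T^i(K_j)$ need not equal any $K_{j''}$, one must choose a single $j'$ that simultaneously dominates $T^i(K_j)$ for all $i \in \{0, \ldots, k-1\}$, which is possible because there are only finitely many such iterates and each image is compact. Once this is handled, the formula in Remark \ref{rmk:metric} provides the explicit dominance $\textrm{d}_{j'}(y_n,y) \leq 2^{j'} \textrm{d}_{C^1}(y_n,y) / (1 - 2^{j'}\textrm{d}_{C^1}(y_n,y))$ for small $\textrm{d}_{C^1}(y_n,y)$, yielding the uniform estimate that establishes continuity of $\Phi$ and, via Proposition \ref{prop:borel1}, completes the proof.
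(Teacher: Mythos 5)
Your proposal is correct and takes essentially the same route as the paper: you define the map $y \mapsto \Psi_{(y,T)}^{(k)}$ from $C^1(U,\mathbb{R})$ to $C^1(U,\mathbb{R}^k)$, show it is continuous using the sequential characterization of $\textrm{d}_{C^1}$-convergence, handle the $|\alpha|=0$ and $|\alpha|=1$ cases separately via the chain rule and a single $j'$ chosen so that $T^i(K_j)\subseteq K_{j'}$ for all $0\leq i\leq k-1$, and then pull back the Borel set from Proposition~\ref{prop:borel1}. The only cosmetic differences are your explicit observation that the map is linear and the quantitative dominance estimate at the end, neither of which the paper needs or uses.
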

\begin{proof}
   Define the operator $\bm{\Psi}:C^1(U,\mathbb{R})\to C^1(U,\mathbb{R}^k)$ as $\bm{\Psi}(y)=\Psi_{(y,T)}^{(k)} = (y,y\circ T,\dots, y\circ T^{k-1}) $ and observe that
    $$\{y\in C^1(U,\mathbb{R}): \Psi_{(y,T)}^{(k)} \text{ is injective on }A\} = \bm{\Psi}^{-1}(\{ F\in C^1(U,\mathbb{R}^k): F \text{ is injective on }A\}).$$
    Since $\{ F\in C^1(U,\mathbb{R}^k): F \text{ is injective on }A\}$ is Borel by Proposition \ref{prop:borel1}, it suffices to show that $\bm{\Psi}$ is continuous, hence Borel measurable. Towards this, let us assume that $\lim_{\ell\to\infty}\textrm{d}_{C^1}(y_{\ell},y)=0$, which equivalently means $\lim_{\ell\to\infty}\textrm{d}_j(y_{\ell},y)=0$, for all $j\in\mathbb{N}$. Recall $\textrm{d}_j$ is defined in Remark \ref{rmk:metric}, based upon a sequence of compact sets $K_j\subseteq U$, such that $K_j\subseteq \text{int}(K_{j+1})$ and $\bigcup_{j\in\mathbb{N}}K_j = U$. Our goal is to show that $\lim_{\ell\to\infty}\textrm{d}_{C^1}(\bm{\Psi}(y_{\ell}),\bm{\Psi}(y)) = 0$, as well.
    
    First, let the multi-index $\alpha \in \mathbb{N}^n$ satisfy $|\alpha| = 0$, so $\partial^\alpha \bm{\Psi}(y) = \bm{\Psi}(y)$. We fix $j\in \mathbb{N}$, and for each $0\leq i \leq k-1$ write $C_i:=T^i(K_j)$, which is compact. Next, we select $j'\in \mathbb{N}$, such that $C_i \subseteq K_{j'}$, for all $0\leq i \leq k-1$. Then, it holds that
    \begin{align*}
        \sup_{x\in K_j}\| \bm{\Psi}(y_{\ell})(x) - \bm{\Psi}(y)(x)\|^2_2 = \sup_{x\in K_j}\sum_{i=0}^{k-1} ( y_{\ell}(T^i(x)) - y(T^i(x)) )^2 &\leq \sum_{i=0}^{k-1} \sup_{x\in K_j} (y_{\ell}(T^i(x))-y(T^i(x)))^2 \\
        &= \sum_{i=0}^{k-1} \sup_{x\in C_i} (y_{\ell}(x)-y(x))^2\\
        &\leq \sum_{i=0}^{k-1} \sup_{x\in K_j'} (y_{\ell}(x)-y(x))^2 \\
         & \leq k \textrm{d}_{j'}(y_{\ell},y)^2 \\
        &\xrightarrow[]{\ell\to\infty} 0.
    \end{align*}
   Now, we consider the case when $\alpha\in\mathbb{N}^n$ satisfies $|\alpha | = 1.$ In this case, we have that 
   
    \begin{align*}
        \sup_{x\in K_j}\|\partial^{\alpha} \bm{\Psi}(y_{\ell})(x) - \partial^{\alpha} \bm{\Psi}(y)(x)\|^2_2 &= \sup_{x\in K_j}\sum_{i=0}^{k-1}\Big( \partial^{\alpha} y_{\ell}(T^i(x)) - \partial^{\alpha}y(T^i(x))\Big)^2 \\
        & = \sup_{x\in K_j}\sum_{i=0}^{k-1}\Big( (\nabla y_{\ell}(T^i(x)) - \nabla y(T^i(x)))\cdot \partial^{\alpha} T^i(x)\Big)^2  \\
        &\leq \sup_{x\in K_j}\sum_{i=0}^{k-1}\|\nabla y_{\ell}(T^i(x)) - \nabla y(T^i(x))\|_2^2 \cdot \| \partial^{\alpha} T^i(x)\|_2^2 \\
        &\leq \sum_{i=0}^{k-1}\sup_{x\in K_{j'}}\|\nabla y_{\ell}(x) - \nabla y(x)\|_2^2 \cdot\sup_{x\in K_j} \| \partial^{\alpha} T^i(x)\|_2^2\\
        &\xrightarrow[]{\ell \to \infty} 0,
    \end{align*}
    where we have applied the Cauchy-Schwartz inequality and again used the fact that $C_i\subseteq K_{j'}$ for each $0\leq i \leq k-1$. Thus, we have shown that 
    $$\lim_{\ell \to \infty}\textrm{d}_j(\bm{\Psi}(y_{\ell}),\bm{\Psi}(y)) = \lim_{\ell\to \infty}\max_{|\alpha|\leq 1} \sup_{x\in K_j}\|\partial^{\alpha} \bm{\Psi}(y_{\ell})(x) - \partial^{\alpha} \bm{\Psi}(y)(x)\|_2= 0, \qquad \forall j\in\mathbb{N}.$$
    Equivalently, $\textrm{d}_{C^1}(\bm{\Psi}(y_{\ell}),\bm{\Psi}(y))\to 0$ as $\ell\to \infty$, which completes the proof. 
\end{proof}

\section{Main Theoretical Results}\label{sec:results}
\setcounter{theorem}{0}
\setcounter{proposition}{0}
\setcounter{theorem}{0}
\setcounter{lemma}{0}
\setcounter{remark}{0}
\setcounter{definition}{0}
\setcounter{corollary}{0}

In this section, we state and prove our main results. The statements of our two main theorems appear in Section \ref{subsec:statements}, discussions follow in Section \ref{subsec:discussions}, and the complete proofs can be found in Section \ref{subsec:proofs}.

\subsection{Statements}\label{subsec:statements}
We are now ready to state our main results. In the following, we use the notation $d_{\mu}:=\text{boxdim}(\text{supp}(\mu))$ to denote the box-counting dimension of the support of a probability measure $\mu\in \mathcal{P}(\mathbb{R}^k)$. Our first result, Theorem \ref{thm:1}, which appears as Theorem 1 in the main text, shows that equality of delay-coordinate invariant measures implies topological conjugacy of the underlying dynamics. 
\begin{theorem}\label{thm:1}
    Let $T,S:U\to U$ be diffeomorphisms of an open set $U\subseteq \mathbb{R}^{n}$ and suppose that $\mu\in \mathcal{P}(U)$ is $T$-invariant, $\nu \in \mathcal{P}(U)$ is $S$-invariant, and $\textnormal{supp}(\mu) ,\textnormal{supp}(\nu) \subseteq U$ are compact. Further assume that $m > 2\max\{ d_{\mu},d_{\nu}\}$, and that the periodic points of $T$ and $S$ with degree at most $m$ satisfy Assumption~\ref{assumption:1} on $\textnormal{supp}(\mu)$ and $\textnormal{supp}(\nu)$, respectively. Then, equality $\hat{\mu}_{(y,T)}^{(m+1)} = \hat{\nu}_{(y,S)}^{(m+1)}$ of the delay-coordinate invariant measures implies topological conjugacy of the maps $T|_{\textup{supp}(\mu)}$ and $S|_{\textup{supp}(\nu)}$, for almost every $y\in C^1(U,\mathbb{R})$. 
\end{theorem}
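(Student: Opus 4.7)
The plan is to use the generalized Takens embedding theorem (Theorem~\ref{thm:takens}) to produce, for a prevalent set of observables $y \in C^1(U,\mathbb{R})$, a conjugating homeomorphism built directly out of the two delay maps. The natural candidate, once both delay maps become injective on the relevant supports, is
\[ h := \bigl[\Psi_{(y,S)}^{(m+1)}|_{\text{supp}(\nu)}\bigr]^{-1} \circ \Psi_{(y,T)}^{(m+1)}|_{\text{supp}(\mu)} : \text{supp}(\mu) \to \text{supp}(\nu), \]
and the bulk of the work is to verify that $h$ is well-defined and intertwines $T$ with $S$.

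First, I would invoke Theorem~\ref{thm:takens} four times, at delay dimensions $m$ and $m+1$, applied to $T$ on $\text{supp}(\mu)$ and to $S$ on $\text{supp}(\nu)$. The hypothesis $m > 2\max\{d_\mu, d_\nu\}$ ensures the dimension inequality in Theorem~\ref{thm:takens} holds at both levels. Each of the four resulting prevalent sets is Borel by Proposition~\ref{prop:borel2}, so iterating Lemma~\ref{lemma:prevalence} gives a prevalent subset $\mathcal{Y} \subseteq C^1(U,\mathbb{R})$ on which all four delay maps are injective on their respective supports. For any $y \in \mathcal{Y}$, each of $\Psi_{(y,T)}^{(m+1)}|_{\text{supp}(\mu)}$ and $\Psi_{(y,S)}^{(m+1)}|_{\text{supp}(\nu)}$ is a continuous injection from a compact set, hence a homeomorphism onto its image. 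Since the pushforward of a compactly supported measure under a continuous map has support equal to the image of the original support, the hypothesis $\hat{\mu}_{(y,T)}^{(m+1)} = \hat{\nu}_{(y,S)}^{(m+1)}$ forces
\[ K := \Psi_{(y,T)}^{(m+1)}(\text{supp}(\mu)) = \Psi_{(y,S)}^{(m+1)}(\text{supp}(\nu)), \]
which makes $h$ a well-defined homeomorphism between $\text{supp}(\mu)$ and $\text{supp}(\nu)$.

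To verify $h \circ T = S \circ h$ on $\text{supp}(\mu)$, I would unpack the definitions and reduce the desired identity to the pointwise coincidence $\hat{T}_{(y,m+1)}|_K = \hat{S}_{(y,m+1)}|_K$ of the two delay-coordinate dynamics from Definition~\ref{def:delay_dynamics}. The structural observation driving the whole proof is that both maps have the shift-plus-prediction form
\[ \hat{T}_{(y,m+1)}(z_0,\ldots,z_m) = (z_1,\ldots,z_m,\ast_T), \qquad \hat{S}_{(y,m+1)}(z_0,\ldots,z_m) = (z_1,\ldots,z_m,\ast_S), \]
so they automatically agree on the first $m$ output coordinates. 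Moreover, since $\hat{T}_{(y,m+1)}$ and $\hat{S}_{(y,m+1)}$ each preserve the common delay-coordinate invariant measure, both images lie in $K$. The decisive step, which I expect to be the main obstacle, is to rule out $\ast_T \neq \ast_S$. For this I would use the fact that $y \in \mathcal{Y}$ also makes $\Psi_{(y,T)}^{(m)}$ injective on $\text{supp}(\mu)$, which implies that the projection $\mathbb{R}^{m+1} \to \mathbb{R}^m$ onto the first $m$ coordinates is injective on $K$. Two points of $K$ sharing their first $m$ coordinates must therefore coincide, giving $\ast_T = \ast_S$ and closing the argument. This is precisely why the theorem requires embedding both at level $m+1$ (to realize $K$ faithfully) and at level $m$ (to separate points of $K$ by their first $m$ coordinates).
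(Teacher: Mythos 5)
Your proposal is correct in substance and captures the same core insight as the paper: a single extra time-delay coordinate carries one forward step of the dynamics, and injectivity of the $m$-dimensional delay map on the supports separates the points needed to recover that step uniquely. But you package this differently. The paper's proof stays in state coordinates: for each $x\in\text{supp}(\nu)$ it produces $z\in\text{supp}(\mu)$ with matching $(m+1)$-dimensional delay vector, splits that vector into its first $m$ and last $m$ components, and reads off the conjugacy pointwise. Your proof instead lifts everything into delay coordinates and exploits the ``shift-plus-prediction'' structure of $\hat{T}_{(y,m+1)}$ and $\hat{S}_{(y,m+1)}$: both act as the shift on the first $m$ output coordinates, so agreement on $K$ reduces to the one missing coordinate, which injectivity of the $m$-level projection on $K$ forces. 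This is a cleaner conceptual framing of the same mechanism, at the cost of having to explicitly verify that $\hat{T}_{(y,m+1)}$ and $\hat{S}_{(y,m+1)}$ both preserve $K$.

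One step is stated in a way that oversteps the hypotheses, though it is easily repaired. You propose to invoke Theorem~\ref{thm:takens} ``four times, at delay dimensions $m$ and $m+1$.'' But Theorem~\ref{thm:takens} at dimension $m+1$ requires Assumption~\ref{assumption:1} to hold for periodic points of degree up to $m+1$, whereas Theorem~\ref{thm:1} only assumes it up to degree $m$. You do not actually need the extra invocations: injectivity of $\Psi_{(y,T)}^{(m+1)}$ on $\text{supp}(\mu)$ follows immediately from injectivity of $\Psi_{(y,T)}^{(m)}$, since the latter is the projection of the former onto its first $m$ coordinates, and likewise for $S$. The paper sidesteps this entirely by never using injectivity of the $(m+1)$-dimensional delay map, only the support identity at level $m+1$ (which needs no injectivity) together with injectivity at level $m$. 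If you replace the two superfluous Takens invocations with the one-line observation that $m$-level injectivity implies $(m+1)$-level injectivity, your argument is complete and valid.
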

\begin{remark}
    Our proof of Theorem \ref{thm:1} primarily relies on the topology of delay coordinates, and the conclusion of Theorem \ref{thm:1} still follows if one replaces the assumption $\hat{\mu}_{(y,T)}^{(m+1)}=\hat{\nu}_{(y,S)}^{(m+1)}$ with the weaker statement that  $\text{supp}(\hat{\mu}_{(y,T)}^{(m+1)})=\text{supp}(\hat{\nu}_{(y,S)}^{(m+1)})$.
\end{remark}
As a Corollary to Theorem \ref{thm:1}, we also demonstrate that ergodicity is preserved under the equality of delay-coordinate invariant measures. 
\begin{corollary}\label{cor:1S}
     Let $T,S:U\to U$ be diffeomorphisms of an open set $U\subseteq \mathbb{R}^{n}$ and suppose that $\mu\in \mathcal{P}(U)$ is $T$-invariant, $\nu \in \mathcal{P}(U)$ is $S$-invariant, and $\textnormal{supp}(\mu) ,\textnormal{supp}(\nu) \subseteq U$ are compact. Further assume that $m > 2\max\{ d_{\mu},d_{\nu}\}$, and that the periodic points of $T$ and $S$ with degree at most $m$ satisfy Assumption~\ref{assumption:1} on $\textnormal{supp}(\mu)$ and $\textnormal{supp}(\nu)$, respectively. If $\mu$ is $T$-ergodic, then equality $\hat{\mu}_{(y,T)}^{(m+1)} = \hat{\nu}_{(y,S)}^{(m+1)}$ of the delay-coordinate invariant measures implies that $\nu$ is also $S$-ergodic, for almost every $y\in C^1(U,\mathbb{R})$.
\end{corollary}
Next, we consider the case when we have equality of a finite number of delay-coordinate invariant measures corresponding to different observation functions. In Theorem \ref{thm:2}, which appears in the main text as Theorem 2, we prove that equality of such measures, combined with a certain initial condition, is sufficient information to uniquely determine the full dynamics on the support of an invariant measure.
\begin{theorem}\label{thm:2}
Let $T,S:U\to U$ be diffeomorphisms of an open set $U\subseteq \mathbb{R}^{n}$, suppose that $\mu\in \mathcal{P}(U)$ is both $T$-invariant and $S$-invariant, and assume that $\textnormal{supp}(\mu) \subseteq U$ is compact. Further assume that $m > 2d_{\mu}$ and that the periodic points of $T$ and $S$ with degree at most $m$ satisfy Assumption \ref{assumption:1} on $\textnormal{supp}(\mu)$. If \begin{enumerate}
     \item[(1)] there exists $x^*\in B_{\mu,T}\cap \textup{supp}(\mu)$, such that $T^k(x^*) = S^k(x^*)$ for $1\leq k \leq m-1$, and 
     \item[(2)] $\hat{\mu}_{(y_j,T)}^{(m+1)} = \hat{\mu}_{(y_j,S)}^{(m+1)}$ for $1\leq j \leq m$, where $Y:=(y_1,\dots, y_m)\in C^1(U,\mathbb{R}^m)$ is a vector-valued observable,
 \end{enumerate}
 then for almost every $Y\in C^1(U,\mathbb{R}^m)$, it holds that  that $T = S$ everywhere on $\textup{supp}(\mu)$.
\end{theorem}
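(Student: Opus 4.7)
The plan is to construct, for each $j$, a homeomorphism $h_j:\text{supp}(\mu)\to\text{supp}(\mu)$ conjugating $T$ and $S$ on the support, use the matched initial orbit to force $h_j=\mathrm{id}$, and then combine the observables through Whitney's embedding theorem to upgrade componentwise equality into $T=S$ on $\text{supp}(\mu)$. The construction of $h_j$ will follow the natural strategy behind Theorem \ref{thm:1}: the equality of the $(m{+}1)$-dimensional delay-coordinate invariant measures pushes forward under projection onto the first $m$ coordinates to give equality of the $m$-dimensional delay-coordinate invariant measures, and in particular $\Psi_{(y_j,T)}^{(m)}(\text{supp}(\mu))=\Psi_{(y_j,S)}^{(m)}(\text{supp}(\mu))$. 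For a prevalent set of $y_j\in C^1(U,\mathbb{R})$, both delay maps are injective on $\text{supp}(\mu)$ by Theorem \ref{thm:takens}, hence homeomorphisms onto that common compact image. I set $h_j:=[\Psi_{(y_j,S)}^{(m)}]^{-1}\circ \Psi_{(y_j,T)}^{(m)}$; the full $(m{+}1)$-dimensional equality of the delay measures supplies the last coordinate needed to verify $S\circ h_j=h_j\circ T$ on $\text{supp}(\mu)$.

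I next pin down $h_j$ using hypothesis (1). Because $T^k(x^*)=S^k(x^*)$ for $0\le k\le m-1$, the two $m$-dimensional delay coordinates of $x^*$ coincide, so by injectivity $h_j(x^*)=x^*$; iterating the conjugacy relation $h_j\circ T=S\circ h_j$ yields $h_j(T^k(x^*))=T^k(x^*)$ for every $k\ge 0$. Since $x^*\in\mathcal{B}_{\mu,T}\cap\text{supp}(\mu)$, testing the basin condition from Definition \ref{def:basin} against a continuous nonnegative bump function supported near an arbitrary point of $\text{supp}(\mu)$ shows that the forward orbit $\{T^k(x^*)\}_{k\ge 0}$ is dense in $\text{supp}(\mu)$, and continuity of $h_j$ forces $h_j=\mathrm{id}$ on $\text{supp}(\mu)$. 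Unwinding the definition of $h_j$ and comparing second coordinates gives $y_j(T(x))=y_j(S(x))$ for every $x\in\text{supp}(\mu)$ and every $1\le j\le m$.

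To promote these componentwise equalities to $T=S$, I observe that since $\mu$ is invariant under the diffeomorphisms $T$ and $S$, a standard argument using $T^{-1}\#\mu=\mu$ (together with closedness of $T(\text{supp}(\mu))$ and minimality of the support) gives $T(\text{supp}(\mu))=S(\text{supp}(\mu))=\text{supp}(\mu)$, so both $T(x)$ and $S(x)$ lie in $\text{supp}(\mu)$ whenever $x$ does. Because $m>2d_\mu$, Theorem \ref{thm:whitney} guarantees that a prevalent set of $Y\in C^1(U,\mathbb{R}^m)$ is injective on $\text{supp}(\mu)$, and for such $Y$ the equality $Y(T(x))=Y(S(x))$ immediately yields $T(x)=S(x)$.

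For the final prevalence bookkeeping, each of the $m$ scalar Takens-type conditions cuts out a prevalent Borel subset of $C^1(U,\mathbb{R})$ (Borel by Proposition \ref{prop:borel2}); padding its probe space with zeros in the other coordinates lifts each set to a prevalent Borel subset of $C^1(U,\mathbb{R}^m)$. Intersecting these $m$ lifts with the Whitney-prevalent Borel subset from Theorem \ref{thm:whitney} (Borel by Proposition \ref{prop:borel1}) via Lemma \ref{lemma:prevalence} produces a single prevalent subset of $C^1(U,\mathbb{R}^m)$ on which the whole argument applies. I expect the main technical obstacle to be the second step: leveraging Takens injectivity and the matched-orbit hypothesis to obtain $h_j(x^*)=x^*$ and then converting pointwise fixing of a dense orbit into $h_j\equiv\mathrm{id}$ via continuity---the remaining steps reduce to clean applications of the embedding theorems and the prevalence calculus developed in Section \ref{subsec:embedding}.
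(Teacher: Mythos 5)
Several pieces of your outline are sound --- the construction of the conjugating homeomorphism $h_j$ from the $(m{+}1)$-dimensional delay-measure equality, the fixed point $h_j(x^*)=x^*$, the density of the forward orbit of $x^*$ in $\textup{supp}(\mu)$, the $T$- and $S$-invariance of $\textup{supp}(\mu)$, and the prevalence bookkeeping (your ``padded'' probe spaces are a valid alternative to the diagonal probe used in Lemma~\ref{lemma:augment}). But your second step contains a genuine gap. The claim that iterating $h_j\circ T=S\circ h_j$ together with $h_j(x^*)=x^*$ ``yields $h_j(T^k(x^*))=T^k(x^*)$ for every $k\ge 0$'' is not justified. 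Iteration gives $h_j(T^k(x^*))=S^k(x^*)$ for all $k$; to upgrade this to $T^k(x^*)$ you need $S^k(x^*)=T^k(x^*)$, which hypothesis (1) supplies only for $k\le m-1$. Concretely, your induction stalls at $k=m$: $h_j(T^m(x^*))=S\bigl(h_j(T^{m-1}(x^*))\bigr)=S(T^{m-1}(x^*))=S^m(x^*)$, and nothing forces this to equal $T^m(x^*)$. Without fixing a dense set of points you cannot conclude $h_j\equiv\mathrm{id}$. A telltale sign that something is wrong: if $h_j\equiv\mathrm{id}$ were established, then $S=h_j^{-1}\circ T\circ h_j=T$ on $\textup{supp}(\mu)$ would follow from the conjugacy alone, rendering the Whitney step you invoke afterwards redundant.

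The repair --- and the key point of the paper's argument --- is to place the Whitney injectivity where it is genuinely needed, namely \emph{before} you try to identify the two orbits. Staying at the level $h_j(T^k(x^*))=S^k(x^*)$ for all $k$, unwind the definition of $h_j$ to read $\Psi_{(y_j,S)}^{(m)}(S^k(x^*))=\Psi_{(y_j,T)}^{(m)}(T^k(x^*))$, and compare first coordinates to obtain the scalar equalities $y_j(S^k(x^*))=y_j(T^k(x^*))$ for every $j$ and every $k$. Since $S^k(x^*),T^k(x^*)\in\textup{supp}(\mu)$ and $Y=(y_1,\dots,y_m)$ is injective on $\textup{supp}(\mu)$, these $m$ scalar equalities combine to give $S^k(x^*)=T^k(x^*)$ for all $k\ge 0$. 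Only then does one globalize --- either via your density-of-orbit observation, or as the paper does in Lemma~\ref{lemma:7} by taking the Birkhoff average of $\phi(x)=\|S(x)-T(x)\|_2$ against the basin condition --- to conclude $T=S$ everywhere on $\textup{supp}(\mu)$.
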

Lastly, we introduce an alternative measure-based approach for performing unique system identification when one can observe the full state. These details are summarized in Proposition \ref{prop:graph_fullS}.

\begin{proposition}\label{prop:graph_fullS}
    Let $T,S:U\to U$ be diffeomorphisms of an open set $U\subseteq \mathbb{R}^{n}$, suppose that $\mu\in \mathcal{P}(U)$ is both $T$-invariant and $S$-invariant, and assume that $\textnormal{supp}(\mu) \subseteq U$ is compact. Then, the equality $(Y,Y\circ T)\# \mu = (Y,Y\circ S)\# \mu$ implies $T = S$ on $\textup{supp}(\mu)$, for almost all $Y\in C^1(U,\mathbb{R}^m)$, where $m > 2d_{\mu}$.
\end{proposition}
\subsection{Discussion}\label{subsec:discussions}
Theorems~\ref{thm:1} and~\ref{thm:2} are our main results for comparing dynamical systems through delay-coordinate invariant measures, based upon uniform time-delay embeddings. We remark that our results hold in great generality, as we place no assumptions on the invariant measures under consideration. In particular, these statements still hold when the invariant measures are singular with respect to the Lebesgue measure and have fractal support, a common situation for attracting dynamical systems~\cite{grassberger1983measuring}. Moreover, we have formulated our results using the mathematical theory of prevalence~\cite{hunt1992prevalence}, arguing that the conclusions of Theorems \ref{thm:1} and \ref{thm:2} hold for almost all observation functions. Practitioners often have little control over the measurement device used for data collection, so our proposed approach of comparing dynamical systems using invariant measures in time-delay coordinates remains broadly applicable.

Notably, Theorem \ref{thm:2} and Proposition~\ref{prop:graph_fullS} motivate two different strategies for uniquely recovering the underlying dynamical system using measure-based comparisons. In particular, we can choose to study either 
$$\underbrace{m\text{ measures in }\mathcal{P}(\mathbb{R}^{m+1})}_{\text{Theorem \ref{thm:2} + initial condition}} \, \text{ or } \, \underbrace{\text{one measure in }\mathcal{P}(\mathbb{R}^{2m})}_{\text{Proposition \ref{prop:graph_fullS}}}  $$
to recover the dynamics. Given that evaluating metrics on the space of probability measures suffers from the curse of dimensionality~\cite{panaretos2019statistical}, the former approach based upon Theorem \ref{thm:2} and the delay-coordinate invariant measure is likely to be more computationally feasible. 

\subsection{Proof of Main Results}\label{subsec:proofs}
We are now ready to present the proofs of our main results.  The proof of Theorem \ref{thm:1}, which is a consequence of the generalized Takens embedding theorem (Theorem \ref{thm:takens}) combined with an observation about the topology of time-delay coordinate systems, appears in Section \ref{subsec:proofs1}. In Section \ref{subsec:proofs2}, we then leverage Theorem~\ref{thm:1}, the generalized Takens embedding theorem (Theorem \ref{thm:takens}), and the generalized Whitney embedding theorem (Theorem~\ref{thm:whitney}), to prove Theorem \ref{thm:2}. The proof of Proposition \ref{prop:graph_fullS} then appears in Section \ref{subsec:proofs3}.
\subsubsection{Proof of Theorem \ref{thm:1}}\label{subsec:proofs1}
In this section, we present a complete proof of Theorem \ref{thm:1}. We begin by establishing a useful lemma that relates the support of a probability measure to the support of its pushforward under a continuous mapping.
\begin{lemma}\label{lemma:1}
    Let $X$ and $Y$ be Polish spaces and $\mu\in \mathcal{P}(X)$ be a Borel probability measure with compact support, and assume that $f:X\to Y$ is continuous. Then, $f(\textup{supp}(\mu)) = \textup{supp}(f\# \mu).$ 
\end{lemma}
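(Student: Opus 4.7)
The plan is to establish the set equality $f(\textup{supp}(\mu)) = \textup{supp}(f\#\mu)$ by proving two inclusions. I will use the standard equivalent characterization of support that is valid on Polish spaces: a point $x$ belongs to $\textup{supp}(\mu)$ if and only if every open neighborhood of $x$ has positive $\mu$-measure. Together with Definition~\ref{def:support} (support as the smallest closed full-measure set), these are the only tools I need.

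For the inclusion $\textup{supp}(f\#\mu) \subseteq f(\textup{supp}(\mu))$, my plan is to observe that $f(\textup{supp}(\mu))$ is itself a closed set of full $f\#\mu$-measure, and then invoke minimality from Definition~\ref{def:support}. Concretely, since $\textup{supp}(\mu)$ is compact by hypothesis and $f$ is continuous, $f(\textup{supp}(\mu))$ is compact and hence closed in $Y$ (Polish spaces are Hausdorff). The inclusion $\textup{supp}(\mu) \subseteq f^{-1}(f(\textup{supp}(\mu)))$ and the definition of pushforward give
\[
(f\#\mu)\bigl(f(\textup{supp}(\mu))\bigr) = \mu\bigl(f^{-1}(f(\textup{supp}(\mu)))\bigr) \ge \mu(\textup{supp}(\mu)) = 1,
\]
so by Definition~\ref{def:support} the support of $f\#\mu$ is contained in $f(\textup{supp}(\mu))$.

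For the reverse inclusion $f(\textup{supp}(\mu)) \subseteq \textup{supp}(f\#\mu)$, I will argue pointwise via the neighborhood characterization. Let $y \in f(\textup{supp}(\mu))$, so $y = f(x)$ for some $x \in \textup{supp}(\mu)$, and let $V \subseteq Y$ be any open neighborhood of $y$. Continuity of $f$ makes $f^{-1}(V)$ open in $X$ and it contains $x$, so by the characterization of $\textup{supp}(\mu)$ we have $\mu(f^{-1}(V)) > 0$; then $(f\#\mu)(V) = \mu(f^{-1}(V)) > 0$, showing $y \in \textup{supp}(f\#\mu)$.

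I do not anticipate any real obstacles here; the only mildly subtle point is that closedness of $f(\textup{supp}(\mu))$ requires either the compactness hypothesis or a properness assumption on $f$, and the statement gives us compactness of $\textup{supp}(\mu)$ for exactly this reason. Without compactness the continuous image of a closed set need not be closed in $Y$, and then the first inclusion could fail at the level of $f(\textup{supp}(\mu))$ itself (though $\overline{f(\textup{supp}(\mu))}$ would still work). The proof is therefore short and relies entirely on definitions together with the compactness assumption.
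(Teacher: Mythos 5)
Your proof is correct, and your argument for the inclusion $\textup{supp}(f\#\mu) \subseteq f(\textup{supp}(\mu))$ is essentially identical to the paper's: show $f(\textup{supp}(\mu))$ is compact, hence closed, has full $f\#\mu$-measure, and invoke minimality of the support. For the reverse inclusion $f(\textup{supp}(\mu)) \subseteq \textup{supp}(f\#\mu)$, you and the paper take genuinely different routes. You argue pointwise via the equivalent characterization that $y \in \textup{supp}(\nu)$ iff every open neighborhood of $y$ has positive $\nu$-measure: taking $y = f(x)$ with $x \in \textup{supp}(\mu)$ and an open $V \ni y$, you note $f^{-1}(V)$ is an open neighborhood of $x$, so $(f\#\mu)(V) = \mu(f^{-1}(V)) > 0$. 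The paper instead stays at the level of Definition~\ref{def:support} only: $C := f^{-1}(\textup{supp}(f\#\mu))$ is closed by continuity, has $\mu(C) = 1$ by the pushforward identity, so $\textup{supp}(\mu) \subseteq C$ by minimality, and applying $f$ gives $f(\textup{supp}(\mu)) \subseteq f(f^{-1}(\textup{supp}(f\#\mu))) \subseteq \textup{supp}(f\#\mu)$. The paper's version has the aesthetic merit of using the same minimality characterization for both inclusions and requiring no auxiliary facts beyond Definition~\ref{def:support}; your version is equally short but imports the neighborhood characterization of support (valid here since Polish spaces are second countable, though the paper never states it). Your remark about why compactness is needed — closedness of $f(\textup{supp}(\mu))$ would otherwise fail — is accurate and the key place the hypothesis enters; note it is used only in the first inclusion, while the reverse inclusion holds without compactness.
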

\begin{proof}
We prove the result via double inclusion, beginning first with the ``$\supseteq $" direction. Note that since $f$ is continuous and $\text{supp}(\mu)$ is compact in $X$, it holds that $D:=f(\text{supp}(\mu))$ is compact in $Y$. Moreover, using the fact that $\text{supp}(\mu) \subseteq f^{-1}(D)$, we obtain that 
    $$(f\# \mu)(D) = \mu(f^{-1}(D)) \geq \mu(\text{supp}(\mu)) = 1.$$
     Since $D$ is closed in $Y$, this implies that $\text{supp}(f\# \mu)\subseteq D = f(\text{supp}(\mu))$. 
     
     To prove the ``$\subseteq$" inclusion, we note by continuity that $C:=f^{-1}(\text{supp}(f\# \mu))$ is a closed subset of $X$ which satisfies
$$\mu(C) = \mu(f^{-1}(\text{supp}(f\# \mu))) = (f\# \mu)(\text{supp}(f\# \mu)) = 1.$$
Thus, $\text{supp}(\mu)\subseteq C = f^{-1}(\text{supp}(f\#\mu)),$ which implies that 
\begin{equation*}\label{eq:supports}
   f(\text{supp}(\mu)) \subseteq f(f^{-1}(\text{supp}(f\# \mu)) \subseteq \text{supp}(f\#\mu). 
\end{equation*}
This establishes the inclusion $f(\text{supp}(\mu)) \subseteq \text{supp}(f\# \mu)$ and completes the proof. 
\end{proof}
The conclusion of Theorem \ref{thm:takens} states that the time-delay map $\Psi_{(y,T)}^{(m)}:U\to \mathbb{R}^m$ is injective on a compact subset $A\subseteq \mathbb{R}^n$. The following lemma will allow us to conclude that $\Psi_{(y,T)}^{(m)}:A\to \Psi_{(y,T)}^{(m)}(A)$ is in-fact a homeomorphism. This fact will be needed when we construct the conjugating map (see Definition \ref{def:conjugacy}) appearing in the conclusion of Theorem \ref{thm:1}.
\begin{lemma}{(\cite[Proposition~13.26]{sutherland2009introduction})}\label{lemma:2}
    Let $X$ and $Y$ be compact spaces and assume that $f:X\to Y$ is continuous and invertible. Then, $f$ is a homeomorphism, i.e., $f^{-1}$ is also continuous.
\end{lemma}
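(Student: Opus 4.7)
The plan is to prove that $f^{-1}: Y \to X$ is continuous by showing that $f$ is a closed map, i.e., that it sends closed sets in $X$ to closed sets in $Y$. Once this is established, the continuity of $f^{-1}$ follows immediately: for any closed $C \subseteq X$, the preimage $(f^{-1})^{-1}(C) = f(C)$ is closed, which is the definition of continuity in terms of closed sets. I note that the result as applied in the paper implicitly uses that $Y$ is Hausdorff; this is automatic in the paper's setting because the relevant spaces are compact subsets of Euclidean space (hence metric, hence Hausdorff), so I will carry this assumption throughout.

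The argument that $f$ is closed decomposes into three standard steps. First, fix an arbitrary closed $C \subseteq X$. Since $X$ is compact and $C$ is a closed subset of a compact space, $C$ is itself compact. Second, because $f$ is continuous, the image $f(C)$ is a continuous image of a compact set and is therefore compact in $Y$. Third, since $Y$ is Hausdorff, every compact subset of $Y$ is closed, so $f(C)$ is closed in $Y$. Putting these three steps together shows that $f$ maps closed sets to closed sets.

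To conclude, I use that $f$ is invertible (bijective) to identify set images under $f$ with set preimages under $f^{-1}$: for any closed $C \subseteq X$,
\begin{equation*}
(f^{-1})^{-1}(C) \;=\; \{\, y \in Y : f^{-1}(y) \in C \,\} \;=\; f(C),
\end{equation*}
which we just showed is closed. Thus $f^{-1}$ is continuous, and $f$ is a homeomorphism.

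The main obstacle is really just bookkeeping rather than genuine difficulty: the result is classical, and the only subtle point is the need for $Y$ to be Hausdorff in order to conclude that compact subsets are closed. Since the paper invokes this lemma only after establishing that the domain and codomain are compact subsets of $\mathbb{R}^n$ (or of Polish/metrizable spaces), the Hausdorff hypothesis is automatically satisfied and need not be stated separately.
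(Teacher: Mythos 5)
Your proof is correct and is exactly the standard argument behind the cited result (closed subsets of a compact space are compact, continuous images of compact sets are compact, and compact subsets of a Hausdorff space are closed, so $f$ is a closed map and $f^{-1}$ is continuous); the paper does not reprove the lemma but simply cites Sutherland, whose proof is this same one. You are also right to flag the Hausdorff hypothesis, which the statement as written omits but which holds automatically in every application in the paper, since the relevant spaces are compact subsets of $\mathbb{R}^m$ or of Polish spaces.
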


It is a well-known fact that the graph $\mathcal{G}[T]:=\{(x,T(x)):x\in X\}$ of a continuous function $T:X\to X$ uniquely characterizes $T$ at all points in $X$. Indeed, if $\mathcal{G}[T] = \mathcal{G}[S]$ for some continuous $S:X\to X$, then one can easily deduce that $T = S.$ Our proof of Theorem \ref{thm:1} can be viewed as a generalization of this observation to the case when the graph $\mathcal{G}[T]$ is distorted by the time-delay map; see Definition \ref{def:time_delay}.
\begin{proof}[Proof of Theorem \ref{thm:1}]
 By Theorem \ref{thm:takens}, we have that
 \begin{align*}
     \mathcal{Y}_1&:=\{y\in C^1(U,\mathbb{R}):\Psi_{(y,S)}^{(m)}\text{ is injective on }\text{supp}(\nu)\}, \\
      \mathcal{Y}_2&:=\{y\in C^1(U,\mathbb{R}):\Psi_{(y,T)}^{(m)}\text{ is injective on }\text{supp}(\mu)\},
 \end{align*}
are prevalent subsets of $C^1(U,\mathbb{R})$. By Lemma~\ref{lemma:prevalence}, it holds that the intersection $\mathcal{Y}:=\mathcal{Y}_1\cap \mathcal{Y}_2$ is also prevalent in $C^1(U,\mathbb{R})$. Now, let $y\in \mathcal{Y}$ be fixed and assume that 
 \begin{equation}\label{eq:assumption}
     \Psi_{(y,S)}^{(m+1)}\# \nu = \Psi_{(y,T)}^{(m+1)}\#\mu.
 \end{equation}
    Since the mappings $$\Psi_{(y,S)}^{(m)}\Big|_{\text{supp}(\nu)}:\text{supp}(\nu)\to \Psi_{(y,S)}^{(m)}(\text{supp}(\nu)),\qquad \Psi_{(y,T)}^{(m)}\Big|_{\text{supp}(\mu)}:\text{supp}(\mu)\to \Psi_{(y,T)}^{(m)}(\text{supp}(\mu))$$ 
    are continuous, invertible, and the sets $\text{supp}(\nu)$ and $\text{supp}(\mu)$ are compact, it follows from Lemma~\ref{lemma:2} that the map $  \Theta_y:\text{supp}(\nu) \to \text{supp}(\mu)$, given by
    \begin{equation}\label{eq:theta}
    \Theta_y(x) :=\Bigg( \Big[\Psi_{(y,T)}^{(m)}\Big]\bigg|_{\text{supp}(\mu)}^{-1} \circ \Big[\Psi_{(y,S)}^{(m)}\Big]\bigg|_{\text{supp}(\nu)}\Bigg)(x),\qquad \forall x\in \text{supp}(\nu),
    \end{equation}
    is a well-defined homeomorphism. We now aim to show that $T|_{\text{supp}(\mu)}$ and $S|_{\text{supp}(\nu)}$ are topologically conjugate (see Definition \ref{def:conjugacy}) via the homeomorphism $\Theta_y$. 
    
    Returning to analyzing \eqref{eq:assumption}, we have by Lemma \ref{lemma:1} that 
    \begin{equation}\label{eq:sup_eq}
       \Psi_{(y,S)}^{(m+1)}(\text{supp}(\nu)) = \text{supp}\Big(\Psi_{(y,S)}^{(m+1)}\#\nu\Big) = \text{supp}\Big(\Psi_{(y,T)}^{(m+1)}\#\mu\Big) = \Psi_{(y,T)}^{(m+1)}(\text{supp}(\mu)). 
    \end{equation}
    Now, let $x\in \text{supp}(\nu)$ be fixed, and note that by the equality of sets \eqref{eq:sup_eq} and the definition of the time-delay map (see Definition \ref{def:time_delay}), there must exist some $z\in \text{supp}(\mu)$, such that
    \begin{equation}\label{eq:point_eq}    (\lefteqn{\underbrace{\phantom{y(x),y(S(x)),\dots, y(S^{m-1}(x))}}_{\Psi_{(y,S)}^{(m)}(x)}}y(x),\overbrace{y(S(x)),\dots, y(S^{m-1}(x)), y(S^m(x))}^{\Psi_{(y,S)}^{(m)}(S(x))}) = (\lefteqn{\underbrace{\phantom{y(x),y(S(x)),\dots, y(S^{m-1}(x))}}_{\Psi_{(y,T)}^{(m)}(z)}}y(z),\overbrace{y(T(z)),\dots, y(T^{m-1}(z)), y(T^m(z))}^{\Psi_{(y,T)}^{(m)}(T(z))})\,.
    \end{equation}
    By equating the first $m$ components, and then the last $m$ components, of the $m+1$ dimensional vectors appearing in \eqref{eq:point_eq}, we obtain the following two equalities:
    \begin{align}
        \Psi_{(y,S)}^{(m)}(x) &= \Psi_{(y,T)}^{(m)}(z) \label{eq:start},\\
        \Psi_{(y,S)}^{(m)}(S(x))&=\Psi_{(y,T)}^{(m)}(T(z))\label{eq:end}.
    \end{align}
    Since $x\in \text{supp}(\nu)$ and $z\in \text{supp}(\mu)$, we deduce from \eqref{eq:start} that $z = \Theta_y(x)$. Substituting this equality into~\eqref{eq:end} yields
    \begin{equation}\label{eq:next}
          \Psi_{(y,S)}^{(m)}(S(x))=\Psi_{(y,T)}^{(m)}(T(\Theta_y(x)))
    \end{equation}

    Moreover, since $T\# \mu = \mu$ and $S\# \nu = \nu$, it follows by Lemma \ref{lemma:1} that $T(\text{supp}(\mu)) = \text{supp}(\mu)$ and $S(\text{supp}(\nu)) = \text{supp}(\nu)$. Thus, $S(x)\in \text{supp}(\nu)$ and $T(\Theta_y(x))\in \text{supp}(\mu)$. This allows us to rearrange~\eqref{eq:next} to find 
    $$S(x) = (\Theta_y^{-1}\circ T \circ \Theta_y)(x).$$
    Since $x\in \text{supp}(\nu)$ was arbitrary, we have the general equality 
    \begin{equation}\label{eq:conjugate}
        S|_{\text{supp}(\nu)} = \Theta_y^{-1} \circ T|_{\text{supp}(\mu)} \circ \Theta_y,
    \end{equation}
   which completes the proof.
    \end{proof}

    Now that Theorem \ref{thm:1} has been established, we can easily deduce Corollary \ref{cor:1S}.
    \begin{proof}[Proof of Corollary \ref{cor:1S}] By Theorem \ref{thm:1}, we have that $\hat{\mu}_{(y,T)}^{(m+1)} = \hat{\nu}^{(m+1)}_{(y,S)}$ implies that $S|_{\text{supp}(\nu)} = \Theta_y^{-1} \circ T|_{\text{supp}(\mu)} \circ \Theta_y$ for a prevalent set $y\in \mathcal{Y}\subseteq C^1(U);$ see \eqref{eq:conjugate}. Now let $y\in \mathcal{Y}$ be fixed. Since $\Theta_y^{-1}:\text{supp}(\mu)\to \text{supp}(\nu)$ is a homeomorphism, we have by Proposition \ref{prop:conjugacy} that $\Theta_y^{-1}\# \mu$ is invariant and ergodic under $S|_{\text{supp}(\nu)}$. Since $\Theta_y^{-1}\#\mu = \nu$, which follows directly from the definition of the map $\Theta_y$ from~\eqref{eq:theta}, it holds that $\nu$ is $S$-ergodic. 
    \end{proof}
\subsubsection{Proof of Theorem \ref{thm:2}}\label{subsec:proofs2}
This section contains a proof of Theorem \ref{thm:2}. We begin by establishing several lemmas which are needed in our proof of the result. First, we will consider the case when two systems agree along an orbit, i.e., $S^k(x^*) = T^k(x^*)$ for all $k\in \mathbb{N}$. We will show that when $x^*\in B_{\mu,T}$ that the equality of $S$ and $T$ along the orbit initiated at $x^*$ implies that $S$ and $T$ agree everywhere on $\text{supp}(\mu)$. 
\begin{lemma}\label{lemma:7}
    Let $S,T:U\to U$ be continuous maps on an open set $U\subseteq \mathbb{R}^n$, let $\mu\in \mathcal{P}(U)$, and let $\textup{supp}(\mu)\subseteq U$ be compact. If for some $x^*\in B_{\mu,T}$ it holds that $S^k(x^*) = T^k(x^*)$ for all $k\in \mathbb{N}$, then $S|_{\textup{supp}(\mu)} = T|_{\textup{supp}(\mu)}$.
\end{lemma}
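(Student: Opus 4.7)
The plan is to show that the forward orbit $\mathcal{O} := \{T^k(x^*) : k \in \mathbb{N}\}$ is dense in $\textup{supp}(\mu)$, and then to use continuity of $S$ and $T$ together with the hypothesis $S^k(x^*) = T^k(x^*)$ to transfer an equality on $\mathcal{O}$ to all of $\textup{supp}(\mu)$.

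First, I would observe that $S$ and $T$ agree on every point of $\mathcal{O}$: for each $k \geq 0$, the identity
\[
S(T^k(x^*)) = S(S^k(x^*)) = S^{k+1}(x^*) = T^{k+1}(x^*) = T(T^k(x^*))
\]
holds by hypothesis, so $S|_{\mathcal{O}} = T|_{\mathcal{O}}$. Since $S$ and $T$ are continuous, this equality extends to the topological closure $\overline{\mathcal{O}}$ taken inside $U$. Consequently, it suffices to prove the inclusion $\textup{supp}(\mu) \subseteq \overline{\mathcal{O}}$.

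Next, I would establish $\textup{supp}(\mu) \subseteq \overline{\mathcal{O}}$ by contradiction using the defining property of the basin $\mathcal{B}_{\mu,T}$ from Definition \ref{def:basin}. Suppose there exists $x_0 \in \textup{supp}(\mu) \setminus \overline{\mathcal{O}}$. Since $U$ is an open subset of $\mathbb{R}^n$ (hence metrizable) and $\overline{\mathcal{O}}$ is closed in $U$, there is an open neighborhood $V \subseteq U$ of $x_0$ with $V \cap \overline{\mathcal{O}} = \emptyset$. By Urysohn's lemma (or explicitly via a truncated distance function restricted to $U$), I can construct a continuous function $\phi \in C(U)$ with $0 \leq \phi \leq 1$, $\phi(x_0) = 1$, and $\phi \equiv 0$ outside $V$. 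Then $\phi(T^k(x^*)) = 0$ for every $k \in \mathbb{N}$, so the empirical averages in \eqref{eq:basin} vanish identically, forcing $\int_U \phi \, d\mu = 0$ via the assumption $x^* \in \mathcal{B}_{\mu,T}$. On the other hand, by continuity of $\phi$ there is a smaller open neighborhood $W \subseteq V$ of $x_0$ on which $\phi \geq 1/2$, and since $x_0 \in \textup{supp}(\mu)$ we have $\mu(W) > 0$ by Definition \ref{def:support}, giving $\int_U \phi \, d\mu \geq \tfrac{1}{2}\mu(W) > 0$. This contradiction yields $\textup{supp}(\mu) \subseteq \overline{\mathcal{O}}$, which combined with the first paragraph completes the proof.

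The only mild technicality is the construction of the separating test function $\phi$ inside the open set $U$; this is standard for metric spaces, but one has to be slightly careful that $\phi$ be globally continuous on $U$ (not merely on $\mathbb{R}^n$) since the basin is defined with respect to $C(U)$. Everything else is a direct unpacking of the definitions of invariance, the basin of attraction, and the support of a measure.
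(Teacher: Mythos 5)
Your proof is correct, but it takes a genuinely different route from the paper's. The paper applies the basin definition to the single test function $\phi(x) := \|S(x)-T(x)\|_2 \in C(U)$: since $\phi(T^k(x^*)) = \|S^{k+1}(x^*) - T^{k+1}(x^*)\|_2 = 0$ for all $k$, the Birkhoff averages vanish, hence $\int_U \phi\, d\mu = 0$; as $\phi \geq 0$ this forces $\phi = 0$ $\mu$-a.e., and then $\textup{supp}(\mu) \subseteq (S-T)^{-1}(\{0\})$ because the latter set is closed and has full $\mu$-measure. You instead establish the intermediate geometric fact that the forward orbit of $x^*$ is dense in $\textup{supp}(\mu)$ --- using a family of Urysohn bump functions as test functions --- and then propagate the equality $S = T$ from the orbit to its closure by continuity. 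Both arguments are valid and both hinge only on the basin property plus closedness under continuity; the paper's choice of a single global test function makes the argument shorter (no separate density step, no construction of bump functions), while yours makes explicit the underlying topological picture (that a basin point's orbit visits every neighborhood of every support point), which is a clean standalone observation even if slightly more machinery than strictly necessary here.
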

\begin{proof}
 Define the continuous function $\phi \in C(U)$ by setting $\phi(x):=\|S(x)-T(x)\|_2\geq 0$, for each $x\in U.$ 
Since $x^*\in B_{\mu,T}$, it follows from Definition \ref{def:basin} that 
\begin{align*}
    \int_{U}\|S(x) - T(x)\|_2d\mu(x) =\lim_{N\to \infty}\frac{1}{N}\sum_{k=0}^{N-1} \phi(T^k(x^*)).
\end{align*}
Moreover, since $T^k(x^*)=S^k(x^*)$ for all $k\in\mathbb{N}$, we find
 \begin{align*}
 \lim_{N\to \infty}\frac{1}{N}\sum_{k=0}^{N-1} \phi(T^k(x^*)) &=\lim_{N\to \infty}\frac{1}{N}\sum_{k=0}^{N-1} \| S(T^k(x^*))-T^{k+1}(x^*)\|_2\\& =\lim_{N\to \infty}\frac{1}{N}\sum_{k=0}^{N-1} \| S^{k+1}(x^*))-T^{k+1}(x^*)\|_2\\
 &= 0.
 \end{align*}
 Since $\|S(x) - T(x)\|_2 \geq 0$ for all $x\in U$, it must hold that $S(x) = T(x)$ for $\mu$-almost all $x\in U$.  
 
 We now define $C:=\{x\in U:S(x) = T(x)\}$ and note $C = \{x\in U: S(x) - T(x) = 0\} = (S-T)^{-1}(\{0\}).$ Since $S-T$ is continuous, it holds that $C$ is closed. Moreover, since $\mu(C) = 1$ it follows by Definition \ref{def:support} that $\text{supp}(\mu)\subseteq C$. Thus, $S|_{\textup{supp}(\mu)} = T|_{\textup{supp}(\mu)},$ as wanted.
\end{proof}
In our proof of Theorem \ref{thm:2}, we will require that $\Psi_{(y_i,T)}^{(m)}$ is injective on $\text{supp}(\mu)$ for each $1\leq i \leq m$, where $Y = (y_1,\dots, y_m)\in C^1(U,\mathbb{R}^m).$ The following result (Lemma \ref{lemma:augment}) is used to show that the set of all $Y\in C^1(U,\mathbb{R}^m)$ for which this property holds is prevalent.
\begin{lemma}\label{lemma:augment}
 Assume that $\mathcal{Y}\subseteq C^1(U,\mathbb{R})$ is prevalent. Then, the set of functions $\bm{\mathcal{Y}}:=\{(y_1,\dots, y_m): y_i \in \mathcal{Y}\}$ is prevalent in $C^1(U,\mathbb{R}^m).$   
\end{lemma}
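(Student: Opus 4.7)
The plan is to construct a probe space for $\bm{\mathcal{Y}}$ by stacking $m$ independent copies of a probe space $E$ for $\mathcal{Y}$, one in each coordinate slot, and then to apply Tonelli's theorem exactly as in the proof of Lemma~\ref{lemma:contain_probe}. Let $E = \textup{span}\{v_1,\ldots,v_k\} \subseteq C^1(U,\mathbb{R})$ be a probe space for $\mathcal{Y}$. For each $1\le i \le m$ and $1\le j \le k$, let $e_{ij} \in C^1(U,\mathbb{R}^m)$ denote the function whose $i$-th component equals $v_j$ and whose remaining components vanish, and set $\bm{E}:=\textup{span}\{e_{ij}\}_{i,j}$, which is a subspace of dimension $km$. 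I claim $\bm{E}$ is a probe space for $\bm{\mathcal{Y}}$.

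First I would verify that $\bm{\mathcal{Y}}$ is a Borel subset of $C^1(U,\mathbb{R}^m)$, as required by Definition~\ref{def:prevalence}. For each $i$, the coordinate projection $\pi_i:C^1(U,\mathbb{R}^m)\to C^1(U,\mathbb{R})$, $(y_1,\ldots,y_m)\mapsto y_i$, is $1$-Lipschitz with respect to the metrics of Remark~\ref{rmk:metric}, since $|\partial^{\alpha}f_i(x) - \partial^{\alpha}g_i(x)| \le \|\partial^{\alpha}f(x) - \partial^{\alpha}g(x)\|_2$ for every $x \in U$ and every multi-index $\alpha$ with $|\alpha| \le 1$. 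Hence each $\pi_i^{-1}(\mathcal{Y})$ is Borel, and so is $\bm{\mathcal{Y}}=\bigcap_{i=1}^m \pi_i^{-1}(\mathcal{Y})$.

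Next, fix $Y=(y_1,\ldots,y_m)\in C^1(U,\mathbb{R}^m)$ and parameterize $\bm{E}$ by coefficients $a=(a_{ij})\in \mathbb{R}^{km}$ via $\bm{e}(a) = \sum_{i,j} a_{ij} e_{ij}$. Since the $i$-th component of $Y+\bm{e}(a)$ is $y_i + \sum_{j=1}^k a_{ij}v_j$, the ``bad'' set of coefficients is
\[
B := \bigcup_{i=1}^m B_i, \qquad B_i := \left\{ a\in \mathbb{R}^{km} : y_i + \sum_{j=1}^k a_{ij}v_j \notin \mathcal{Y}\right\}.
\]
Each $B_i$ is Borel, being the preimage of $\mathcal{Y}^c$ under a continuous linear map from $\mathbb{R}^{km}$ into $C^1(U,\mathbb{R})$. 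Because $E$ is a probe space for $\mathcal{Y}$, for every fixed choice of $(a_{i'j})_{i'\neq i}$ the $a_i$-slice of $B_i$ has zero $\lambda_k$-measure; Tonelli's theorem then yields $\lambda_{km}(B_i)=0$, and subadditivity gives $\lambda_{km}(B)=0$. Thus $Y+\bm{e}\in \bm{\mathcal{Y}}$ for Lebesgue-almost every $\bm{e}\in \bm{E}$, establishing that $\bm{E}$ is a probe space for $\bm{\mathcal{Y}}$.

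The main obstacle I anticipate is the measurability bookkeeping, namely ensuring that $\bm{\mathcal{Y}}$ and each $B_i$ are Borel so that Tonelli's theorem applies. The key observation is the $1$-Lipschitz continuity of the coordinate projections $\pi_i$, which reduces this to the already-known Borel measurability of $\mathcal{Y}$ supplied by the hypothesis. Beyond this step, the Tonelli computation mirrors the one carried out in Lemma~\ref{lemma:contain_probe} essentially verbatim.
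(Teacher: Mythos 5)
Your argument is correct, but it uses a genuinely different probe space than the paper's. You build a $km$-dimensional probe by placing an independent copy of the basis of $E$ in each coordinate slot, and then run the Tonelli argument from Lemma~\ref{lemma:contain_probe}: for each $i$, the bad set $B_i\subseteq\mathbb{R}^{km}$ has null slices, hence measure zero, hence the finite union is null. The paper instead uses the $k$-dimensional \emph{diagonal} probe $\bm{E}=\{(v,\dots,v): v\in E\}$, i.e.\ the same perturbation in every coordinate slot. With that choice the $j$-th component of $(y_1,\dots,y_m)+\sum_i a_i(e_i,\dots,e_i)$ is simply $y_j+\sum_i a_ie_i$, so for each fixed $j$ the good coefficients form a full-$\lambda_k$-measure set $B_j\subseteq\mathbb{R}^k$, and the finite intersection over $j=1,\dots,m$ is still full measure. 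This bypasses Tonelli entirely and avoids needing to check that each $B_i$ is Borel; the only measurability point is that $\bm{\mathcal{Y}}$ itself is Borel, which both proofs establish in the same way (continuity of the coordinate projections). Both arguments are valid. The paper's is marginally leaner, while yours is the more mechanical generalization of Lemma~\ref{lemma:contain_probe} and, being self-contained, does not rely on the observation that a single perturbation can be reused across all coordinates.
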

\begin{proof}
We first show that $\bm{\mathcal{Y}}\subseteq C^1(U,\mathbb{R}^m)$ is Borel. Towards this, define the projection $$\bm{\pi}_i: C^1(U,\mathbb{R}^m)\to C^1(U,\mathbb{R}), \qquad \bm{\pi}_i((y_1,\dots, y_m)):= y_i\in C^1(U,\mathbb{R}),$$
for all $(y_1,\dots, y_m)\in C^1(U,\mathbb{R}^m)$ and each $1\leq i \leq m.$. The projection $\bm{\pi}_i$ is continuous, hence Borel measurable. Since $\mathcal{Y}$ is Borel, it holds that $\bm{\pi}_i^{-1}(\mathcal{Y})$ is also Borel for $1\leq i \leq m$. Then, we can write $\bm{\mathcal{Y}}=\bigcap_{i=1}^m \bm{\pi}_i^{-1}(\mathcal{Y})$, which verifies that $\bm{\mathcal{Y}}\subseteq C^1(U,\mathbb{R}^m)$ is Borel. 

Since $\mathcal{Y}$ is prevalent, there exists a $k$-dimensional probe space $E\subseteq C^1(U,\mathbb{R})$ admitting a basis $\{e_i:1\leq i \leq k\}$, such that for any $y\in C^1(U,\mathbb{R})$, it holds that $y+ \sum_{i=1}^k a_ie_i \in \mathcal{Y}$ for Lebesgue-almost every $(a_1,\dots, a_k)\in \mathbb{R}^k$. Next, we will define the augmented probe space $$\bm{E}:=\{(v,\dots, v): v\in E\}\subseteq C^1(U,\mathbb{R}^m),$$ and note that $\{(e_i,\dots, e_i):1\leq i \leq k\}$ constitutes a basis for $\bm{E}.$  Now, fix $(y_1,\dots, y_m)\in C^1(U,\mathbb{R}^m)$ and observe that for each $1\leq j \leq m$, there exists a full Lebesgue measure set $B_j\subseteq \mathbb{R}^k$, such that $y_j+\sum_{i=1}^k a_ie_i \in \mathcal{Y}$ for all $(a_1,\dots, a_k)\in B_j$. By construction, the set $B:=\bigcap_{j=1}^k B_j$ has full Lebesgue measure in $\mathbb{R}^k$, and for all $(a_1,\dots,a_k)\in B$ it then holds that 
$$(y_1,\dots,y_m)+ \sum_{i=1}^{k}a_i(e_i,\dots, e_i) = \Bigg(y_1 + \sum_{i=1}^k a_i e_i,\dots, y_m + \sum_{i=1}^k a_i e_i\Bigg) \in \bm{\mathcal{Y}}, $$
which completes the proof.

\end{proof}

In our proof of Theorem \ref{thm:2}, we will utilize the generalized Takens theorem (Theorem \ref{thm:takens}) to conclude that each $\Psi_{(y_i,T)}^{(m)}$ is injective on the compact set $\text{supp}(\mu)$, for each $1\leq i \leq m$. We would also like to use the generalized Whitney theorem (Theorem~\ref{thm:whitney}) to conclude that $Y=(y_1,\dots, y_m)$ is injective on $\text{supp}(\mu)$. However, in the generalized Whitney theorem it is assumed that $Y\in C^1(\mathbb{R}^n,\mathbb{R}^m)$, whereas in our statement of Theorem~\ref{thm:2} we have $Y\in C^1(U,\mathbb{R}^m)$, for an arbitrary open set $U \supseteq \text{supp}(\mu)$. The following lemma leverages the Whitney extension theorem (see \cite{whitney1992analytic}) to provide a reformulation of the generalized Whitney embedding theorem in this setting.
\begin{lemma}\label{lemma:extend}
    Let $A\subseteq U \subseteq \mathbb{R}^n,$ where $A$ is compact and $U$ is open, set $d:=\textnormal{boxdim}(A)$, and let $m > 2d$ be an integer. For almost every smooth map $F\in C^1(U,\mathbb{R}^m)$, it holds that $F$ is one-to-one on $A$ and an immersion on each compact subset of a smooth manifold contained in $A$. 
\end{lemma}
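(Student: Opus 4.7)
The plan is to reduce Lemma \ref{lemma:extend} to the global version in Theorem \ref{thm:whitney} by transferring the prevalent set from $C^1(\mathbb{R}^n,\mathbb{R}^m)$ down to $C^1(U,\mathbb{R}^m)$ via a smooth cutoff extension. The essential observation is that the probe space in Theorem \ref{thm:whitney} consists of linear maps on $\mathbb{R}^n$ (Remark \ref{rmk:linear}), which are globally defined and therefore restrict naturally to a finite-dimensional subspace of $C^1(U,\mathbb{R}^m)$ of the same dimension $nm$, since the restriction map on linear maps is injective (a linear map is determined by its values on any open set). No full-strength Whitney extension theorem is needed here; the compactness of $A$ inside the open set $U$ lets an elementary bump-function construction play the role of extension.

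First, I would verify that the set $\mathcal{F} := \{F \in C^1(U,\mathbb{R}^m) : F \textup{ is injective on } A\}$ is Borel in the topology induced by $\textup{d}_{C^1}$. This is a verbatim adaptation of Proposition \ref{prop:borel1} with the codomain dimension replaced by $m$. The set of maps that are additionally immersions on a fixed compact submanifold of $A$ is likewise Borel, since the maximum-rank condition on the Jacobian is an open condition in the $C^1$ topology and can be expressed as a countable intersection of such open sets.

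Next, since $A \subseteq U$ is compact and $U$ is open, I would choose a smooth cutoff $\chi \in C_c^\infty(U,[0,1])$ with $\chi \equiv 1$ on an open neighborhood $V$ of $A$ satisfying $\overline{V}\subseteq U$. For any $F \in C^1(U,\mathbb{R}^m)$, extending $\tilde{F} := \chi F$ by zero outside $U$ yields a function in $C^1(\mathbb{R}^n,\mathbb{R}^m)$ that agrees with $F$ pointwise, together with its derivatives, on $V \supseteq A$. Taking $E \subseteq C^1(U,\mathbb{R}^m)$ to be the restrictions to $U$ of linear maps $\mathbb{R}^n \to \mathbb{R}^m$, every linear $L$ satisfies $(\tilde{F} + L)|_A = (F + L|_U)|_A$, so the injectivity of $\tilde{F}+L$ on $A$ is equivalent to the injectivity of $F + L|_U$ on $A$. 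Applying Theorem \ref{thm:whitney} to $\tilde{F}$ with the linear probe space from Remark \ref{rmk:linear} yields $F + L|_U \in \mathcal{F}$ for Lebesgue-almost every $L \in E$, which is precisely the prevalence condition. The same extension and probe space handle the immersion conclusion, since $\tilde{F}$ and $F$ have equal derivatives on $V \supseteq A$.

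The main subtlety will be the compatibility bookkeeping between the two ambient spaces: one must check that the restriction from $C^1(\mathbb{R}^n,\mathbb{R}^m)$-linear maps to their images in $C^1(U,\mathbb{R}^m)$ preserves dimension (it does, by uniqueness of analytic continuation of linear functions) and that the Borel set $\mathcal{F} \subseteq C^1(U,\mathbb{R}^m)$ pulls back correctly under the extension-then-restriction procedure. Beyond that, the argument is essentially mechanical, and the flexibility in choosing $\chi$ creates no obstruction because only the behavior of $\tilde{F}$ on the set $A$ — where $\chi \equiv 1$ — enters the injectivity and immersion conditions.
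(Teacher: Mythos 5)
Your argument reaches the same conclusion by the same overall strategy as the paper — restrict the linear probe space from $\mathcal{L}(\mathbb{R}^n,\mathbb{R}^m)$ to $U$, extend $F$ from $U$ to all of $\mathbb{R}^n$, and invoke Theorem \ref{thm:whitney} with Remark \ref{rmk:linear} — but you use a genuinely different, and arguably cleaner, extension device. Where the paper calls on the Whitney Extension Theorem to produce $\tilde{F}\in C^1(\mathbb{R}^n,\mathbb{R}^m)$ with $\tilde{F}|_A = F|_A$, you take a compactly supported bump $\chi\in C_c^\infty(U)$ with $\chi\equiv 1$ on an open neighborhood $V\supseteq A$ with $\overline{V}\subseteq U$, set $\tilde F = \chi F$, and extend by zero. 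This is elementary (no named extension theorem needed), and it gives a little more for free: since $\tilde F$ agrees with $F$ together with its first derivatives on the open set $V\supseteq A$, the immersion half of the conclusion follows by the same probe-space argument, whereas the paper's proof only explicitly transfers the injectivity conclusion (agreement on $A$ alone does not obviously carry derivative information). Your dimension-preservation remark is the same observation the paper makes; phrasing it as ``uniqueness of analytic continuation'' is heavier than necessary — the paper's direct check that $\sum a_i L_i|_U\equiv 0$ forces $a_i=0$ because $U$ is open is all that is needed, and is what you should say. The Borel-measurability of the immersion locus is asserted but not proved in your sketch; note, however, that the paper's own proof does not verify it either (Proposition \ref{prop:borel1} covers only injectivity), and the paper explicitly remarks that the immersion property is not used downstream, so this is not a defect particular to your argument. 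In short: correct, same skeleton, lighter technical ingredient that incidentally closes a small gap in the paper's treatment of the immersion claim.
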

\begin{proof}
    By Theorem \ref{thm:whitney}, the result holds when $U = \mathbb{R}^n$, and by Remark \ref{rmk:linear} a suitable probe is given by space of linear maps between $\mathbb{R}^n$ and $\mathbb{R}^m$. Hereafter, we will denote this space by $\mathcal{L}(\mathbb{R}^n,\mathbb{R}^m)$. In the case when $U \neq \mathbb{R}^n$, we claim that the restricted space of linear maps $E:=\{ L|_U: L\in \mathcal{L}(\mathbb{R}^n,\mathbb{R}^m)\} $ is a suitable probe. We remark that if $W:=\{ L_1,\dots, L_{mn}\}$ is a basis for $\mathcal{L}(\mathbb{R}^n,\mathbb{R}^m)$, which is $nm$-dimensional, then $\hat{W}:=\{L_1|_U,\dots, L_{mn}|_U\}$ is a basis for $E$, which remains $nm$-dimensional. 
    
    To verify that $\hat{W}$ is a basis, let  $\hat{L}\in E$, and note that $\hat{L} = L|_U$ for some $L\in \mathcal{L}(\mathbb{R}^n,\mathbb{R}^m)$. Since $W$ is a basis for $\mathcal{L}(\mathbb{R}^n,\mathbb{R}^m)$, we can write $L = \sum_{i=1}^{nm} a_i L_i$ for some coefficients $a_i\in\mathbb{R}$, $1\leq i \leq nm$, and thus $\hat{L} = \sum_{i=1}^{nm}a_iL_i|_U.$ Therefore, $\hat{W}$ spans $E$. To see that the elements of $\hat{W}$ are linearly dependent, assume that $\sum_{i=1}^{nm}a_iL_i|_U \equiv 0 \in \mathbb{R}^m $ for some coefficients $a_i\in\mathbb{R}$ with $1\leq i \leq nm$. By linearity and the fact that $U$ is open, this implies $\sum_{i=1}^{nm}a_iL_i \equiv 0 \in \mathbb{R}^m $, and since $\{L_i\}_{i=1}^{nm}$ forms a basis, this implies $a_i = 0$ for each $1\leq i \leq nm$.

    To complete the proof, it remains to show that $E$ is a probe space. Towards this, let $F\in C^1(U,\mathbb{R}^m)$ be fixed, and note that by the Whitney Extension Theorem there exists $\tilde{F}\in C^1(\mathbb{R}^n,\mathbb{R}^m)$, such that $\tilde{F}|_A = F|_A$; see \cite{whitney1992analytic}. Then, by Theorem \ref{thm:whitney} and Remark \ref{rmk:linear}, it holds that $\tilde{F}+\sum_{i=1}^{nm}a_i L_i$ is injective on $A$ for Lebesgue almost all $(a_1,\dots, a_{nm})\in \mathbb{R}^{nm}$. Since $\tilde{F}|_A = F|_A$, it follows that $F+\sum_{i=1}^{nm}a_iL_{i}|_U$ is injective on $A$ for Lebesgue almost all $(a_1,\dots, a_{nm})\in \mathbb{R}^{nm}$, which completes the proof.
\end{proof}

We are now ready to present the proof of Theorem \ref{thm:2}. 
\begin{proof}[Proof of Theorem \ref{thm:2}]
By Theorem \ref{thm:1}, the set of $y \in C^1(U,\mathbb{R})$ such that the equality of measures $\hat{\mu}_{(y,S)}^{(m+1)}=\hat{\mu}_{(y,T)}^{(m+1)}$ implies the topological conjugacy of $T|_{\text{supp}(\mu)}$ and $S|_{\text{supp}(\mu)}$ is prevalent. We denote this set by $\mathcal{Y}$. Moreover, by Lemma~\ref{lemma:augment} it holds that the set $\bm{\mathcal{Y}}:=\{(y_1,\dots, y_m): y_i \in \mathcal{Y}\}$ is prevalent in $C^1(U,\mathbb{R}^m).$ By Lemma~\ref{lemma:extend}, it follows that the set $\bm{\mathcal{Z}}\subseteq C^1(U,\mathbb{R}^m)$ of smooth maps that are injective on $\text{supp}(\mu)$ is also prevalent. 
 Since the finite intersection of prevalent sets is prevalent (see Lemma \ref{lemma:prevalence}) it holds that $\bm{\mathcal{W}}:=\bm{\mathcal{Y}}\cap \bm{\mathcal{Z}}$ is a prevalent subset of $C^1(U,\mathbb{R}^m)$. 

We now fix an  element $Y = (y_1,\dots, y_m)\in \bm{\mathcal{W}}$. Note that by the conclusion of Theorem \ref{thm:1}, we have 
    \begin{equation}\label{eq:supps}
        S|_{\textup{supp}(\mu)} = (\Theta_{y_i}^{-1} \circ T|_{\text{supp}(\mu)}\circ \Theta_{y_i}),\qquad 1\leq i \leq m,
    \end{equation}
    where $\Theta_{y_i}:\text{supp}(\mu)\to \text{supp}(\mu)$ is defined in \eqref{eq:theta}. Evaluating \eqref{eq:supps} at $x^*\in B_{\mu,T}\cap \text{supp}(\mu)$ and composing on both sides then yields 
        \begin{equation*}\label{eq:supps2}
        S|_{\textup{supp}(\mu)}^k(x^*) = (\Theta_{y_i}^{-1} \circ T|_{\text{supp}(\mu)}^k\circ \Theta_{y_i})(x^*),\qquad k\in \mathbb{N},\qquad 1\leq i \leq m.
    \end{equation*}
    Then, since $S^k(x^*) = T^k(x^*)$ for $0\leq k \leq m-1$, it holds that $\Theta_{y_i}(x^*) = x^*$ for $1\leq i \leq m$, which is a consequence of the definition of the delay map~\eqref{eq:delay_map} and the construction of $\Theta_{y_i}$; see \eqref{eq:theta}.  
    Therefore,
    \begin{equation}\label{eq:supps3}
        S|_{\textup{supp}(\mu)}^k(x^*) = (\Theta_{y_i} ^{-1}\circ T|_{\text{supp}(\mu)}^k)(x^*),\qquad k\in \mathbb{N},\qquad 1\leq i \leq m.
    \end{equation}
    Using the definition of $\Theta_{y_i}$, we now rearrange \eqref{eq:supps3} to find that 
     \begin{equation}\label{eq:supps4}
        \Psi_{(y_i,S)}^{(m)}(S|_{\textup{supp}(\mu)}^k(x^*)) = \Psi_{(y_i,T)}^{(m)}(T|_{\textup{supp}(\mu)}^k(x^*)),\qquad k\in \mathbb{N},\qquad 1\leq i \leq m.
    \end{equation}
    Again, using the definition of the delay map~\eqref{eq:delay_map}, and equating the first components of the vectors in \eqref{eq:supps4} reveals that $y_i(S^k(x^*)) = y_i(T^k(x^*))$ for all $i=1,\ldots, m$ and any $k\in\mathbb{N}.$ Recall that $Y 
 = (y_1,\ldots,y_m)\in \bm{\mathcal{W}} = \bm{\mathcal{Y}} \cap \bm{\mathcal{Z}}$ is injective.  
 Thus, $S^k(x^*) = T^k(x^*)$ for all $k\in \mathbb{N}$. The conclusion then follows from Lemma~\ref{lemma:7}, which completes the proof.
\end{proof}
\subsubsection{Proof of Proposition \ref{prop:graph_fullS}}\label{subsec:proofs3}
Rather than using each observation function $y_i$ of $Y = (y_1,\dots, y_m)$ to construct a different delay-coordinate invariant measure $\hat{\mu}_{(y_i,T)}^{(m+1)},$ one can additionally study the measure $(Y,Y\circ T)\#\mu$ to uniquely identify the dynamics $T$ on $\text{supp}(\mu).$ This is the content of Proposition \ref{prop:graph_fullS}, which we prove here for completeness. The argument is analogous to our proof of Theorem \ref{thm:1}.

\begin{proof}[Proof of Proposition \ref{prop:graph_fullS}]
It follows by Lemma \ref{lemma:augment} that the set $$\mathcal{Y}:=\{Y\in C^1(U,\mathbb{R}^m): Y \text{ is injective on }{\text{supp}(\mu)}\}$$ is prevalent in $C^1(U,\mathbb{R}^m)$. Now, let $Y\in\mathcal{Y}$ be fixed and notice by Lemma \ref{lemma:1} that 
$$(Y,Y\circ T)(\text{supp}(\mu)) = \text{supp}((Y,Y\circ T)\#\mu) = \text{supp}((Y,Y\circ S)\#\mu) = (Y,Y\circ S)(\text{supp}(\mu)),$$
and thus 
\begin{equation}\label{eq:eq_final}
    \{(Y(x),Y(T(x))):x\in \text{supp}(\mu)\} = \{(Y(x),Y(S(x))):x\in \text{supp}(\mu)\}.
\end{equation}
Thus, for fixed $x\in \text{supp}(\mu)$ it follows by \eqref{eq:eq_final} that there is $z\in \text{supp}(\mu)$ such that 
$$(Y(x),Y(T(x))) = (Y(z),Y(T(z))).$$
Analyzing the first components we have $Y(x) = Y(z)$, and since $Y$ is injective on $\text{supp}(\mu)$ it holds that $x = z.$ Thus, $Y(T(x))=Y(S(x))$, which again  the injectivity of $Y$ on $\text{supp}(\mu)$ implies that $T(x) = S(x)$. Since $x\in \text{supp}(\mu)$ was chosen arbitrarily, we have $T|_{\text{supp}(\mu)} = S|_{\text{supp}(\mu)}$.
\end{proof}
\section{Numerical Experiments}\label{sec:full_exp}

\subsection{Code Availability}
Our code for the numerical experiments presented in the main text is publicly available on GitHub \cite{code}. Our GitHub also includes two tutorial notebooks which contain a step-by-step walk-through of the implementation of Algorithm 1 and Algorithm 2 presented in the main text. While Tutorial 1 focuses on the Kuramoto--Sivashinksy example presented in the main text and Tutorial 2 focuses on the Lorenz-63 example, the code we have developed is flexible and can be easily adapted to the user's application of choice.

\subsection{Discussion on Numerical Algorithms}
In the main text, we introduced Algorithm 1 and Algorithm 2 for performing data-driven parameter estimation using delay-coordinate invariant measures. Here we comment on a few key details required for the implementation of these algorithms, including the choice of embedding parameters, the choice of objective function, and gradient computations. 

Both Algorithms 1 and 2 require the construction of delay-coordinate invariant measures, which means that the embedding parameters must be estimated from data. While in our numerical experiments we have found that heuristic choices produce strong results, several approaches for automating and optimizing the selection of these embedding parameters exist.  For example, Cao's method can be used to select an appropriate embedding dimension \cite{cao1997practical}, while approaches based on the mutual information can be used to calibrate the delay parameter \cite{fraser1986independent}. 

We now clarify some quick notational details regarding the time-delay parameter. In Algorithms~1~and~2, we have assumed the available data is evenly sampled with $\Delta t_{\text{samp}} > 0$ and have used $\overline{\tau}\in \mathbb{N}$ to denote the discrete time-delay where $\tau = \overline{\tau}\Delta t_{\text{samp}}$. When modeling continuous-time dynamical systems,  $T_{\theta}$ should thus be regarded as the time-$\tau$ flow map of a vector field for the proper construction of the delay-coordinate invariant measures appearing in Algorithm~2. %

The choice of metric or divergence $\mathcal{D}:\mathcal{P}(\mathbb{R}^n)\times \mathcal{P}(\mathbb{R}^n)\to [0,\infty)$ over the probability space can significantly impact the performance of Algorithms 1 and 2. For high-dimensional applications, it is necessary that $\mathcal{D}$ can be directly evaluated on a point cloud. Some choices of $\mathcal{D}$ suitable for such high-dimensional applications include the Maximum Mean Discrepancy (MMD) and the Wasserstein distance, derived from the theory of optimal transport. Variants of the Wasserstein distance, such as the Sinkhorn divergence or the sliced Wasserstein distance, can also help accelerate computations. Several of these loss functions, including MMD and the Sinkhorn divergence, are implemented in the \texttt{Geomloss} library~\cite{feydy2019interpolating}, which is compatible with Pytorch's autograd engine and enables seamless gradient computations when using Algorithm 2. 

We conclude by noting that certain modifications to the objective functions presented in the algorithms from the main text can be useful in practice, as highlighted in the following examples. First, in Section~\ref{subsec:lorenz}, we discuss an alternative formulation of the training loss that is particularly advantageous when only finite-sample empirical distributions are available. Second, in Section~\ref{subsec:flow}, we demonstrate how information about the trajectory's initial condition, specifically, the first condition in Theorem~\ref{thm:2}, can be incorporated into the optimization procedure when aligning delay-coordinate invariant measures.

\subsection{Kuramoto--Sivashinsky Equation}

We now recall the experimental details for our parameter identification results of the Kuramoto--Sivashinsky (KS) equation presented in the main text. The equation is solved using an exponential time-differencing (ETD) scheme \cite{Koehler_Machine_Learning_and} on the spatio-temporal domain $[0,100]\times [0,10^4]$, subject to periodic spatial boundary conditions. We use an initial condition of $u_0(x) = \sin(\pi x/50)$ and discretization parameters $\Delta x = 0.5$ and $\Delta t = 0.1$. The dynamics are partially observed via $y(u(x,t)) = u(0,t)$ and the trajectory is slowly sampled with $\Delta t_{\text{samp}} = 3.0$. The partial observations are further contaminated with Gaussian measurement errors having mean zero and standard deviation $\sigma = 0.25$. To form the delay-coordinate invariant measure $\hat{\mu}_y$ from this partially observed noisy data, we use an embedding dimension of $m = 5$ and time-delay equal to the sampling interval, i.e., $\tau = 3.0$. The measure $\hat{\mu}_y\in \mathcal{P}(\mathbb{R}^5)$ is represented as an empirical distribution from the observed data and consists of approximately $3\cdot 10^3$ samples. 

For a given parameter $\theta\in [0.5,1.5]$, we generate a new solutions of the KS equation using the ETD scheme from which we extract the time-series $\{y_{\theta}(t_i)\}\subseteq \mathbb{R}$ and corresponding delay-coordinate invariant measure $\hat{\mu}_y(\theta).$ We then consider two methods of comparison for inferring the ground truth parameter $\theta = 1$ from the observed data: (1) a pointwise comparison between simulated and observed time-series, e.g., $\sum_i\|y(t_i)-y_{\theta}(t_i)\|_2$, and (2) a comparison between the simulated and observed delay-coordinate invariant measures, e.g., $\mathcal{D}(\hat{\mu}_y(\theta),\hat{\mu}_y)$, where in this experiment we select $\mathcal{D}$ as the sliced Wasserstein distance. 

We then use the Nelder--Mead optimizer to perform parameter identification using both pointwise and measure-based objectives. Over ten trials with the initial parameter guess uniformly sampled over $[0.5,1.5]$, the identification based on delay-coordinate invariant measures achieved an absolute error of $0.026\pm 0.019$, whereas the pointwise estimator incurred a larger error of $0.386\pm 0.140.$ Due to the chaotic behavior of the KS equation, the pointwise approach is unable to reliably detect the ground truth parameter, whereas the comparison based on delay-coordinate invariant measures is considerably more robust.

\subsection{Lorenz-63 System}\label{subsec:lorenz}
Our observed data for the Lorenz-63 system consists of a long noise-free trajectory generated via Euler's method. The data $\{u(t_i)\}\subseteq \mathbb{R}^3$ is obtained over the time interval $[0,2\cdot 10^3]$ using a time step of $\Delta t = 0.01$. The neural network $v_{\theta}:\mathbb{R}^3\to \mathbb{R}^3$ used to parameterize the unknown velocity is a multi-layer perceptron with hyperbolic tangent activation and layer structure $3\to 100 \to 100\to 100 \to 3$. To learn the dynamics, we consider two different loss functions based on the comparison of invariant measures. 

First, we consider $\mathcal{J}_{1}(\theta) = \mathcal{D}(T_{\theta}\#\mu, \mu) $, where  $\mu$ is the ground truth invariant measure and $T_{\theta}$ is the time-$\tau \Delta t$ flow map of the vector field $v_{\theta}$ with $\tau = 0.1$. When $\mathcal{J}_1$ is reduced to zero we have that $T_{\theta}\#\mu = \mu$, implying that $\mu$ is an invariant measure for $T_{\theta}.$  While the loss function $\mathcal{J}_1$ enforces equality of state-coordinate invariant measures, following Algorithm 2 in the main text we also consider $\mathcal{J}_2(\theta) = \mathcal{D}(T_{\theta}\#\mu, \mu)+ \mathcal{D}(\Psi_{(y,T_{\theta})}\#\mu, \hat{\mu}_{y})$, where $y:\mathbb{R}^3\to \mathbb{R}$ is the projection onto the first component, i.e., $y(x) = x\cdot e_1$, where $e_1\in \mathbb{R}^3$ is the first standard basis vector.  When $\mathcal{J}_2(\theta)$ is reduced to zero we have that $T_{\theta}\#\mu = \mu$, and thus $\Psi_{(y,T_{\theta})}\#\mu$ is the delay-coordinate invariant measure for $T_{\theta}$ which equals the observed delay-coordinate invariant measure $\hat{\mu}_y.$

In practice, we use a modification of the loss functions $\mathcal{J}_1$ and $\mathcal{J}_2$, i.e., 
\begin{equation}\label{eq:unbiased}
    \widetilde{\mathcal{J}}_1(\theta) = \mathcal{D}(T_{\theta}\#\mu,T\#\mu) ,\qquad \widetilde{\mathcal{J}}_2(\theta) = \mathcal{D}(T_{\theta}\#\mu,T\#\mu) + \mathcal{D}(\Psi_{(y,T_{\theta})}\#\mu,\Psi_{(y,T)}\#\mu)
\end{equation}
where $T$ is the time-$\tau \Delta t$ flow map of the ground truth Lorenz-63 system. Though the formulations in~\eqref{eq:unbiased} are equivalent to $\mathcal{J}_1$ and $\mathcal{J}_2$ in the infinite sample limit, they can have additional benefits when considering finite sample approximations to the underlying measures \cite{schiff2024dyslim}. We proceed to train the neural network models with the objectives \eqref{eq:unbiased} using the Adam optimizer with a learning rate of $10^{-3}$ for $10^4$ iterations. At each step of the optimization, the losses~\eqref{eq:unbiased} are approximated using $N = 500$ random samples from $\mu$ based on the long observed trajectory. In this experiment, $\mathcal{D}$ is chosen as the MMD based upon the so-called energy distance kernel; see \cite{feydy2019interpolating}. 

After training the neural network models 10 times with different random initializations, we evaluate performance by simulating long trajectories according to the learned vector fields. The reconstruction error is then quantified via the sliced Wasserstein distance between the simulated trajectory and the observed data samples. When only enforcing equality of invariant measures during model training, the errors are distributed over the interval $[10.07,1908]$ with a median of 393.58. When additionally enforcing the equality of the delay-coordinate invariant measures, the errors become significantly lower and are distributed between $[0.65, 1.59]$ with a median of $0.91.$

\subsection{Flow Past Cylinder}\label{subsec:flow}

The inference data for our cylinder flow example is generated using the D2Q9 Lattice Boltzman method \cite{Koehler_Machine_Learning_and}. Using a mesh spacing of $\Delta x = 0.1$ and a time-step of $\Delta t = 10^{-3}$, we simulate a solution with Re $= 70$ within the physical domain $\Omega = [0,15]\times [0,5]$, with the top and bottom of the domain subject to periodic boundary conditions. We consider 7 distinct probes placed in the resulting vortex street at locations $\{c_j\}_{j=1}^7\subseteq \Omega$ which measure the flow speed, subject to uncertainty in the location measurement. That is, every time the flow speed is recorded by a probe, the measurement is taken at a location randomly sampled from a normal distribution with mean $c_j$ and standard deviation $\sigma = 0.05.$ 

Our training training data consists of a sparse, noisy, and slowly sampled time-series from each probe $\{y_j(t_i)\}_{i=1}^{N}$ of length $N = 200$ and sampling interval $\Delta t _{\text{samp}} = 0.1$. We then regard $\{(y_1(t_i),\dots, y_7(t_i))\}_{i=1}^N \subseteq \mathbb{R}^7$ as the trajectory of an autonomous dynamical system, which we seek to model using a neural network parameterized velocity $v_{\theta} : \mathbb{R}^7\to \mathbb{R}^7$ with layer structure $7\to 100 \to 100 \to 100\to 7$. From each time-series $\{y_j(t_i)\}_{i=1}^N$, we construct a sample-based approximation to the delay-coordinate invariant measure $\hat{\mu}_{y_i}\in \mathcal{P}(\mathbb{R}^m)$ using the embedding dimension $m = 4$ and time-delay $\tau = 0.2.$ 

Following Algorithm 2, we then train a model for the velocity $v_{\theta}$, based upon the loss 
\begin{equation}\label{eq:objinit}
  \mathcal{J}(\theta) = \mathcal{D}(T_{\theta}\#\mu, \mu) + \sum_{j=1}^{7} \mathcal{D}(\Psi_{(y_j,T_{\theta})}\#\mu,\hat{\mu}_{y_j}) + \frac{1}{7m}\sum_{j=1}^7\sum_{k=0}^{m-1}\|y_j(t_k;\theta) - y_j(t_k)\|_2^2,
\end{equation}
where $T_{\theta}$ is the time-$\tau\Delta t_{\text{samp}}$ flow map of $v_{\theta}$, and again we use the energy distance MMD for $\mathcal{D}$.
While the first two terms of \eqref{eq:objinit} comprise the objective in Algorithm 2, the last term ensures that the initial evolution of the learned trajectory aligns with the initial evolution of the data collected from the flow sensors. Incorporating this information about the initial condition is crucial when one wants to make pointwisely accurate forecasts. It is also a necessary ingredient in Theorem \ref{thm:2} for proving uniqueness of the reconstructed dynamics from multiple delay-coordinate invariant measures. We lastly remark that, for this example, the pointwise matching in \eqref{eq:objinit} only involves matching the first $m=4$ samples of the evolving trajectory and represents a mild condition compared to pointwise matching of the entire dataset.

Similar to \eqref{eq:unbiased}, we use an analagous reformulation of \eqref{eq:objinit}, given that only sparse finite-sample approximations of the invariant measure $\mu$ and delay-coordinate invariant measures $\hat{\mu}_{y_j}$ are available. After training models for the velocity $v_{\theta}$ using both the delay measure objective \eqref{eq:objinit} and an objective based only on the invariant measure in the state coordinate, we find that the approach based on delay measures can accurately predict the evolution of the sensors past the training period on unseen testing data. In particular, the root mean-squared error (RMSE) for predicting a short time-series is $0.23\pm 0.15$ using the loss \eqref{eq:objinit}, while it is $2.85 \pm 0.007$ for an approach which only enforces equality of invariant measures in the state coordinates.

\end{document}